\renewcommand{\leq}{\leqslant}
\renewcommand{\geq}{\geqslant}
\title[An Algebraic Characterization of the Point-Pushing Subgroup]{An Algebraic Characterization of the Point-Pushing Subgroup}
\subjclass[2010]{Group theory and generalizations}
\keywords{mapping class group, point-pushing subgroup, surface subgroup, Johnson homomorphism, Andreadakis-Johsnson filtration, lower central series}
\author[Akin]{\bfseries Victoria Akin}
\address{
Department of Mathematics \\ 
University of Chicago  \\
Chicago, IL\\
60637}
\email{toriakin@math.uchicago.edu}
\newcommand{\bbZ}{\mathbb{Z}}  
\newcommand{\bbQ}{\mathbb{Q}}  
\newcommand{\bbC}{\mathbb{C}}
\newcommand{\bbN}{\mathbb{N}}
\newcommand{\puncture}{\Sigma_{g,1}}
\newcommand{\surface}{\Sigma_{g}}
\newcommand{\boundary}{\Sigma_{g}^1}
\newcommand{\mcg}{\text{Mod}(\Sigma_{g,1})}
\newcommand{\extmcg}{\text{Mod}^\pm(\Sigma_{g,1})}
\newcommand{\extmcgs}{\text{Mod}^\pm(\Sigma)}
\newcommand{\mcgs}{\text{Mod}(\Sigma)}
\newcommand{\mcgn}{\text{Mod}(\hat{\Sigma}_{g,1})}
\newcommand{\mcgb}{\text{Mod}(\Sigma_{g}^1)}
\newcommand{\al}{\alpha}
\newcommand{\pp}{\pi_1(\Sigma_g)}
\newcommand{\gl}{\text{GL}_{2g}(\bbZ)}
\newcommand{\gln}{\text{GL}_{n}(\bbZ)}
\newcommand{\glnq}{\text{GL}_{n}(\bbQ)}
\newcommand{\Sp}{\text{Sp}_{2g}(\bbZ)}
\newcommand{\Sppm}{\text{Sp}_{2g}^{\pm}(\bbZ)}
\newcommand{\Spq}{\text{Sp}_{2g}(\bbQ)}
\newcommand{\ia}{\mathcal{I}}
\newcommand{\iap}{\ia(\Sigma)}
\newcommand{\ian}{\ia(N)}
\newcommand{\iant}{\ia_2(N)}
\newcommand{\iapt}{\ia_2(\Sigma)}
\newcommand{\iapk}{\ia_k(\Sigma)}
\newcommand{\iank}{\ia_k(N)}
\newcommand{\iast}{\ia_{g,1}}
\newcommand{\iab}{\ia_{g}^{1}}
\newcommand{\mcgone}{\text{Mod}(\Sigma_{1,1})}
\newcommand{\mcgnpm}{\text{Mod}^\pm(\hat{\Sigma}_{g,1})}
\newcommand{\mcgg}{\text{Mod}(\Sigma_g)}
\newcommand{\po}{\pi_1}
\tikzstyle{startstop} = [rectangle, rounded corners, minimum width=3cm, minimum height=1cm,text centered, draw=black,]
  \newtheoremstyle{TheoremNum}
        {\topsep}{.2cm}              
        {\itshape}                      
        {}                              
        {\bfseries}  
        {}                             
        { }                             
        {\thmname{#1}\thmnote{ #3}}
    \theoremstyle{TheoremNum}
    \newtheorem{corn}{Corollary}
    \newtheorem{propn}{Proposition}
    \newtheoremstyle{Theorembold}
            {.2cm}{.2cm}              
            {\itshape}                      
            {}                              
            {\bfseries}                     
            {.}                             
            { }                             
            {\thmname{#1}\thmnumber{ #2}\thmnote{ #3}}
                \newtheoremstyle{TheoremboldDef}
                        {.2cm}{.2cm}              
                        {}                      
                        {}                              
                        {\bfseries}                     
                        {.}                             
                        { }                             
                        {\thmname{#1}\thmnumber{ #2}\thmnote{ #3}}
    \newtheoremstyle{TheoremboldRem}
            {.5cm}
            {}              
            {}                      
            {}                              
            {\bfseries}                     
            {.}                             
            { }                             
            {\thmname{#1}\thmnumber{ #2}\thmnote{ #3}}
\theoremstyle{Theorembold}
 \newtheorem{thm}{Theorem}[section]
 \newtheorem{prop}[thm]{Proposition}
 \newtheorem{lem}[thm]{Lemma}
 \newtheorem{cor}[thm]{Corollary}
\theoremstyle{TheoremboldDef}
 \newtheorem{exm}[thm]{Example}
 \newtheorem{dfn}[thm]{Definition}
  \newtheorem{rem}[thm]{Remark}
  \theoremstyle{TheoremboldRem}
 \newtheorem*{remarks}{Remarks}
 \newtheorem*{cor*}{Corollary}
 \theoremstyle{TheoremboldRem}
\begin{document}

\vspace{18mm} \setcounter{page}{1} \thispagestyle{empty}

\begin{abstract}
The \emph{point-pushing subgroup} $P(\surface)$ of the mapping class group $\mcg$ of a surface with marked point is an embedding of $\pp$ given by pushing the marked point around loops. We prove that for $g\geq 3$, the subgroup $P(\surface)$ is the unique normal, genus $g$ surface subgroup of $\mcg$. As a corollary to this uniqueness result, we give a new proof that $\text{Out}(\text{Mod}^\pm(\Sigma_{g,1}))=1$, where $\text{Out}$ denotes the outer automorphism group; a proof which does not use automorphisms of complexes of curves. Ingredients in our proof of this characterization theorem include combinatorial group theory, representation theory, the Johnson theory of the Torelli group, surface topology, and the theory of Lie algebras.
\end{abstract}

\maketitle

\section*{Introduction}
Let $\Sigma_{g}$ (respectively, $\Sigma_{g,1}$) be a compact, connected surface of genus $g$ (respectively, with one marked point). Let $\Sigma$ be either $\Sigma_g$ or $\Sigma_{g,1}$. The \emph{mapping class group} $\mcgs$ is the group of orientation-preserving homeomorphisms of $\Sigma$ modulo isotopy. The map $\puncture\to\surface$ given by ``forgetting" the marked point induces an injection \[F: \mcg\hookrightarrow\mcgg.\] For $g\geq 2$, the \emph{point-pushing subgroup} is defined by \[P(\surface):= \ker(F).\] Informally, $P(\surface)$ is the subgroup of $\mcg$ consisting of elements that ``push" the marked point along closed curves in the surface. Birman in \cite{birmancraggs} (see also \cite{birman2}) proved that $P(\surface)\cong\pp$. A \emph{genus $h$ surface group} is any group isomorphic to $\pi_1(\Sigma_h)$. In particular, $P(\surface)$ is an example of a normal, genus $g$ surface subgroup of $\mcg$.
\begin{thm}[(Uniqueness of $P(\surface)$)]\label{maintheorem}
Let $g\geq 3$. The point-pushing subgroup $P(\surface)$ is the unique normal, genus $g$ surface subgroup inside $\mcg$.
\end{thm}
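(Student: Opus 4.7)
The plan is to reduce the theorem to showing that the image $\bar N := F(N)$ in $\mcgg$ is trivial. Indeed, any infinite-index subgroup of a genus-$g$ surface group is free, and a finite-index subgroup of index $k$ has genus $k(g-1)+1 > g$ for $k>1$; thus any genus-$g$ surface subgroup of $P(\surface)$ equals $P(\surface)$. So it suffices to prove $F(N) = 1$, i.e.\ $\bar N = 1$. Note that $\bar N \trianglelefteq \mcgg$ (since $F$ is surjective) and $\bar N$ is a quotient of the surface group $\pp$.

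\emph{Step 1 (land in Torelli).} I would first show $\bar N \subseteq \ia(\surface) := \ker(\mcgg \to \Sp)$. The image $\rho(\bar N) \trianglelefteq \Sp$ is finitely generated and is a quotient of $\pp$. By the Bass--Milnor--Serre congruence subgroup property (valid for $g \geq 2$) this is either central or contains a principal congruence subgroup. Rule out the finite-index case using Kazhdan's Property (T), together with the $\Sp$-module structure of $\bar N^{ab}$, which is an $\Sp$-quotient of the irreducible rational representation $H \otimes \bbQ$ for $g \geq 3$: Property (T) forces the abelianization of any finite-index subgroup of $\Sp$ to be finite, and this is incompatible with being surjected onto from a nontrivial quotient of $H \otimes \bbQ$ that factors through a surface group. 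The central case, where $\rho(\bar N) \subseteq \{\pm I\}$, reduces to a possible index-$2$ extension, which is excluded using the centerlessness of $\mcgg$ for $g \geq 3$ and a direct topological argument involving the hyperelliptic involution.

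\emph{Step 2 (Johnson filtration).} Now assume $\bar N \subseteq \ia(\surface)$. Apply the first Johnson homomorphism $\tau_1 : \ia(\surface) \to \Lambda^3 H / H$, where $H := H_1(\surface; \bbZ)$. Its restriction to $\bar N$ factors through $\bar N^{ab}$, an $\Sp$-equivariant quotient of $H$, so $\tau_1|_{\bar N}$ is itself $\Sp$-equivariant. For $g \geq 3$ the target is the irreducible $\Sp$-module $V(\omega_3)$, distinct from $H = V(\omega_1)$; Schur's lemma then forces $\tau_1(\bar N) = 0$ and hence $\bar N \subseteq \ia_2(\surface)$. Iterate with the higher Johnson homomorphisms $\tau_k : \ia_k(\surface)/\ia_{k+1}(\surface) \hookrightarrow \mathrm{Hom}(H, L_{k+1}(\pp))$, using Morita's description of the images as specific $\Sp$-submodules. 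At each level, an $\Sp$-equivariant map from a quotient of $H$ into the relevant image must be zero by the representation-theoretic decomposition, yielding $\tau_k(\bar N) = 0$ for all $k$. Thus $\bar N \subseteq \bigcap_k \ia_k(\surface)$.

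\emph{Conclusion and main obstacle.} The central difficulty is the final passage from $\bar N \subseteq \bigcap_k \ia_k(\surface)$ to $\bar N = 1$. The triviality of the intersection of the Johnson filtration is an open Andreadakis-type problem, so a direct separation argument is unavailable; the proof must instead exploit the special structure of $\bar N$ as a quotient of a surface group. One approach is to combine the residual nilpotence of $\pp$ with the compatibility between the Johnson filtration on $\mcg$ restricted to $P(\surface)$ and the lower central series of $\pp$: a putative nontrivial $x \in \bar N$ in every $\ia_k(\surface)$ lifts to an element $y \in \pp$ detectable at some lower central series level $\gamma_n \pp$, and this detection must be visible through $\tau_n$, contradicting Step 2. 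Alternatively, one may appeal to Hain's results on the Malcev Lie algebra of $\mcgg$ relative to the Torelli subgroup to obtain the needed residual statement on the Johnson filtration.
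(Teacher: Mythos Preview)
Your strategy---push $\bar N = F(N)$ down the Johnson filtration of the \emph{closed} surface and then invoke some residual statement---is genuinely different from the paper's, which never leaves $\mcg$: the paper uses Korkmaz's theorem to identify the $\mcg$-action on $H_1(N)$ with the symplectic representation, compares $N$ and $P$ at the level of $\iap/\iapt$ via the single decomposition $\Lambda^3H_\bbQ\cong H_\bbQ\oplus V(\omega_3)$, runs a commutator induction to get $\gamma_2(N)\subset\gamma_2(P)$, and then finishes with an explicit characterization $P=\{x\in\iap:[x,\iap]\subset\gamma_2(P)\}$ proved via bounding-pair maps and the Alexander method.

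There are two substantive gaps in your proposal. First, Step 1 is not an argument as written. Knowing that $\rho(\bar N)\trianglelefteq\Sp$ is either central or of finite index (by CSP or Margulis) is fine, but your exclusion of the finite-index case does not go through: Property (T) gives that a finite-index subgroup of $\Sp$ has finite abelianization, yet there is no contradiction with being a quotient of $\pp$, since quotients of $\pp$ can certainly have finite abelianization. Your appeal to an ``$\Sp$-module structure on $\bar N^{ab}$ as a quotient of $H$'' already presupposes that the conjugation action of $\mcg$ on $N^{ab}$ is the standard symplectic representation---precisely the content of Korkmaz's theorem, which the paper invokes and you do not.

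Second, and more seriously, Step 2's iteration rests on the unproved assertion that $\mathrm{Hom}_{\Sp}(H_\bbQ,\mathrm{im}(\tau_k^{\mathrm{closed}})\otimes\bbQ)=0$ for \emph{all} $k\geq 1$. For $k=1$ this is true because the image is the irreducible $V(\omega_3)$, but for $k\geq 2$ the images of the higher Johnson homomorphisms are complicated $\Sp$-modules (cf.\ Hain, Morita), and you give no argument that $V(\omega_1)$ never occurs. ``The representation-theoretic decomposition'' is not a proof; even if the claim is true, it requires either a case-by-case computation for each $k$ or a uniform structural argument, neither of which you supply. The paper sidesteps this entirely by working only at level $k=1$ (where $\Lambda^3 H_\bbQ$ suffices) and then using the characterization of $P$ rather than the full filtration.

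Finally, you have misdiagnosed the difficulty. The passage from $\bigcap_k\ia_k(\surface)$ to triviality is \emph{not} an open Andreadakis-type problem: it follows from Grossman's Property~A together with conjugacy $p$-separability of surface groups, exactly as the paper argues in its Section~3 for the analogous statement $\bigcap_k P\cdot\iapk=P$. Your suggestion to ``combine residual nilpotence with compatibility'' is pointing in the right direction but is not a proof; the lift of a putative $x\in\bar N$ to $N\cong\pp$ is only an abstract isomorphism and carries no a priori relation to the Johnson filtration of $\surface$, so the contradiction you sketch does not materialize without further input.
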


\begin{remarks}[on Theorem \ref{maintheorem}]\ \\
\vspace{-.5cm}
\begin{enumerate}[label=\arabic*.,leftmargin=1.5cm]
\item  Theorem \ref{maintheorem} has a beautiful free group analogue, proven by Formanek in 1990. Our proof follows in outline the proof given by Formanek in \cite{Formanek}. Even so, in establishing the main result we will have to overcome several obstacles to reconcile the differences between free groups and surface groups.\\
 Let $\{a_1,b_1,\dots, a_g,b_g\}$ be a standard set of generators for $\pp$. In general, the surface relation $(\Pi_i^g[a_i, b_i]=1)$ pervades the objects associated to $\pp$ and muddies the analogy between free and surface groups. Some key differences between $F_n\triangleleft\text{Aut}(F_n)$ and $\pp\triangleleft\mcg$ are summarized in the following table:
 
 \hspace{-2.5cm}\begin{tabular}[h]{p{6.8cm}p{.8cm}p{6.7cm}}
 &&\\
 $F_n\triangleleft\text{Aut}(F_n)$  && $\pp\triangleleft\mcg$ \\
 \hline &&\\ The representation theory of $\glnq$ reveals properties of $\text{Aut}(F_n)$ because &&
  The representation theory of $\Spq$ reveals properties of $\mcg$ because\\ $\text{Aut}(F_n)\twoheadrightarrow \text{Aut}(F_n/\gamma_2(F_n))\cong \gln.$ && $\mcg\twoheadrightarrow\Sp.$\\
 &&\\

 Let $\ia(F_n)$ be the Torelli subgroup of $\text{Aut}(F_n)$, see Definition \ref{torelli}. $\ia(F_n)$ has torsion-free abelianization. Specifically, $H_1(\ia(F_n);\bbZ)\cong\Lambda^3\bbZ^n$. && Let $\iap$ be the Torelli subgroup of $\mcg$, see Definition \ref{torelli}. The abelianization of $\iap$ contains 2-torsion. That is, $H_1(\iap);\bbZ)\cong \Lambda^3\bbZ^{2g}\oplus \mathcal{B}/\langle \al \rangle$ where $\mathcal{B}/\langle\al\rangle$ is 2-torsion, see Proposition \ref{computeH1}. The existence of this 2-torsion comes from the Rochlin invariant in 3-manifold theory.\\ &&\\ Let $\ia_2(F_n)$ be the second term in the Andreadakis-Johnson filtration, see Definition \ref{torelli}. Then
 $[\ia(F_n),\ia(F_n)]=\ia_2(F_n)$. && Let $\iapt$ be the second term in the Andreadakis-Johnson filtration, see Definition \ref{torelli}. Then $[\iap,\iap]\not=\iapt$.
\\
 \end{tabular}\\
 \\
 
\item That $P(\surface)$ is normal in $\mcg$ is necessary for the uniqueness result stated in Theorem \ref{maintheorem}. Clay-Leininger-Mangahas in \cite[Cor.1.3]{nonnormal} construct infinitely many nonconjugate genus $g$ surface subgroups in $\mcgg$. See also the work of Leininger-Reid \cite[Cor.5.6]{leininger}. Specifically for a surface with one marked point, we can find surface subgroups $\pi_1(\Sigma_h)<\mcg$ for infinitely many $h$ using the Thurston norm (see Example \ref{example} below). \\
\item Theorem \ref{maintheorem} does not hold for $g=1$. Because $\mcgone\cong SL_2\bbZ$ has a finite index free subgroup, $\mcgone$ has no surface subgroups. It is not known whether or not $P(\Sigma_2)$ is the only normal, genus 2 surface subgroup in $\text{Mod}(\Sigma_{2,1})$.\\
\end{enumerate}

\end{remarks}

The \emph{extended mapping class group} $\extmcgs$ is the group of all homeomorphisms (orientation preserving and reversing) of $\Sigma$, modulo isotopy. The Dehn-Nielsen-Baer theorem establishes an isomorphism \[\Phi:\extmcg\stackrel{\cong}{\longrightarrow} \text{Aut}(\pp).\] As a consequence of the Dehn-Nielsen-Baer theorem, \[\Phi(P(\surface))=\text{Inn}(\pp)\triangleleft \text{Aut}(\pp)\] where $\text{Inn}(\pp)$ is the group of inner automorphisms of $\pp$.  \\

Burnside in \cite[pp. 261]{burnside} proved that for a centerless group $G$ (which gives $G\cong\text{Inn}(G)\triangleleft\text{Aut}(G)$), if every $\phi\in \text{Aut}(\text{Aut}(G))$ satisfies $\phi(G)=G$, then \[\text{Aut}(\text{Aut}(G))=\text{Inn}(\text{Aut}(G))\cong \text{Aut}(G).\] See Section \ref{burnsec} below for a short proof. Since $\pp$ is centerless for $g>1$, Burnside's result together with Theorem \ref{maintheorem} implies: \\ 

\begin{cor}[(Ivanov-McCarthy's Theorem)]\label{maincorollary}
Let $g\geq 3$. Then $\text{Out}(\extmcg)$ is trivial.
\end{cor}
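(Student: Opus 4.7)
The plan is to apply the Burnside criterion stated just above to $G = \pp$, using the Dehn--Nielsen--Baer identification $\Phi\colon \extmcg \xrightarrow{\cong} \text{Aut}(\pp)$ under which $P(\surface)$ corresponds to $\text{Inn}(\pp)$. Since $g \geq 3$, the group $\pp$ is centerless, so Burnside's criterion reduces the corollary to verifying that every $\phi \in \text{Aut}(\extmcg)$ satisfies $\phi(P(\surface)) = P(\surface)$. Theorem \ref{maintheorem} is designed to supply exactly this, once we can show that $\phi(P(\surface))$ actually lies in $\mcg$.

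Fix $\phi \in \text{Aut}(\extmcg)$. The first step is to establish that $\mcg$ is characteristic in $\extmcg$. Since $\extmcg/\mcg \cong \bbZ/2$ is abelian, $[\extmcg,\extmcg] \subseteq \mcg$. For the reverse inclusion I would invoke perfectness of $\mcg$ for $g \geq 3$: this follows from Powell's theorem that $\mcgg$ is perfect, combined with the Birman exact sequence $1 \to \pp \to \mcg \to \mcgg \to 1$ and the five-term exact sequence in homology (the coinvariants of the $\mcgg$-action on $H_1(\pp) = \bbZ^{2g}$ vanish, since $\mcgg$ surjects onto $\Sp$). Thus $\mcg = [\extmcg,\extmcg]$ is characteristic, and so $\phi(\mcg) = \mcg$.

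Since $P(\surface) \triangleleft \extmcg$, its image $\phi(P(\surface))$ is also normal in $\extmcg$; and $\phi(P(\surface)) \subseteq \phi(\mcg) = \mcg$, so it is a normal subgroup of $\mcg$. It is abstractly isomorphic to $P(\surface) \cong \pp$, hence a normal genus $g$ surface subgroup of $\mcg$, and Theorem \ref{maintheorem} forces $\phi(P(\surface)) = P(\surface)$. Burnside's criterion then yields $\text{Aut}(\text{Aut}(\pp)) = \text{Inn}(\text{Aut}(\pp))$, i.e.\ every automorphism of $\extmcg$ is inner, so $\text{Out}(\extmcg) = 1$. All of the genuine difficulty is absorbed into Theorem \ref{maintheorem}; the only additional input is the characteristicity of $\mcg$ in $\extmcg$, which is also where the hypothesis $g \geq 3$ enters the present deduction.
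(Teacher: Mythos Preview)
Your proof is correct and follows essentially the same route as the paper: identify $\mcg$ with the commutator subgroup of $\extmcg$ to conclude it is characteristic, then apply Theorem~\ref{maintheorem} to see that $P(\surface)$ is characteristic in $\extmcg$, and finish with Burnside's criterion. The only difference is cosmetic: to obtain $\mcg=[\extmcg,\extmcg]$, the paper simply cites $H_1(\extmcg;\bbZ)\cong\bbZ/2\bbZ\cong\extmcg/\mcg$ and compares indices, whereas you argue via perfectness of $\mcg$ (Powell plus the five-term sequence); both arguments are standard and valid for $g\geq 3$.
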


\begin{remarks}[on Corollary \ref{maincorollary}]\ \\
\vspace{-.4cm}
\begin{enumerate}[label=\arabic*.,leftmargin=1.5cm]
\item For $g\geq 3$, Ivanov-McCarthy proved that Out($\extmcg$)=1, from which they deduced that $\text{Out}(\mcgg)\cong \bbZ/2\bbZ$, see \cite{IvanovMccarthy}, \cite[Th.5]{ivanov1984} and \cite[Th.1]{McCtrivout}. In fact, Ivanov-McCarthy proved a much stronger result for injective homeomorphisms of finite index subgroups of $\mcgg$. Their work uses the deep theorem of Ivanov that the automorphism group of the complex of curves is the extended mapping class group. Our proof does not use this theorem. \\
\item The result of Ivanov-McCarthy that Out($\extmcg$)=1 implies that $P(\surface)$ is characteristic in $\mcg$, since all automorphisms of $\extmcg$ are inner.  In contrast, our characterization theorem (Theorem \ref{maintheorem}) implies that $P(\surface)$ is characteristic, from which we deduce (with Burnside) that $\mcg$ has no outer automorphisms.\\
\item McCarthy in \cite{McCtrivout} proved that $\text{Out(Mod)}^\pm(\Sigma_{2,1})$ is nontrivial, which implies (with Burnside) that $P(\Sigma_{2})$ is not characteristic in $\text{Mod}^\pm(\Sigma_{2,1})$. Thus, $P(\Sigma_2)$ is not the only normal, genus 2 surface subgroup in $\text{Mod}^\pm(\Sigma_{2,1})$. However, it is unknown whether or not these additional normal, genus 2 surface subgroups are contained in $\text{Mod}(\Sigma_{2,1})$.\\
\end{enumerate}
 \end{remarks}

\subsection{Structure of the Proof} Ingredients in our proof of Theorem \ref{maintheorem} include combinatorial group theory, representation theory, the Johnson theory of the Torelli group, the theory of Lie algebras, and surface topology. These tools allow us to characterize $P(\surface)$ in terms of two filtrations: the lower central series of $P(\surface)$ and the Andreadakis-Johnson filtration of $\mcg$. By showing any arbitrary normal, genus $g$ surface subgroup must also have those same characterizing properties, we demonstrate that $P(\surface)$ is unique.\\

 To condense notation, let $P:=P(\surface)$. Let $N\triangleleft\mcg$ be a normal subgroup abstractly isomorphic to $\pp$. We must prove that $N=P$.\\

  \begin{dfn}\label{lcs}
 The \emph{lower central series} of a group $G$, denoted as \[G=\gamma_1(G)\supset\gamma_2(G)\supset\dots\] is defined inductively as $\gamma_{i+1}(G)=[\gamma_i(G),G]$.
  \end{dfn}
  Let $Z(G)$ denote the center of a group $G$. The lower central series is \emph{central}, i.e. $\gamma_k(G)/\gamma_{k+1}(G)\subset Z(G/\gamma_{k+1}(G))$ for each $k$. Further, each $\gamma_k(\pp)$ is \emph{characteristic} in $\pp$, i.e. invariant under automorphisms of $\pp$. As such, there is a family of well-defined maps \[\Psi_k:\mcg\to\text{Aut}(\pp/\gamma_{k+1}(\pp)).\]
   \begin{dfn}\label{torelli}
 The \emph{Johnson filtration} of $\mcg$, denoted as \[\iap=\ia_1(\Sigma)\supset\iapt\supset\dots\] is defined as \[\iapk:=\ker(\Psi_k).\]  The first term $\iap$ is referred to as the \emph{Torelli group} of $\mcg$. 
  \end{dfn}
   
 By assumption, $N\cong\pp$. As such, for some surface $\hat{\Sigma}_{g,1}$, we can define an injection $N\hookrightarrow\mcgn$ so that the image of $N$ is the point-pushing subgroup in $\mcgn$. In this paper, we will consider both the Johnson filtration for $\mcgn$ and for $\mcg$. To distinguish these two filtrations we will use the notation $\iank$ for $\mcgn$ and $\iapk$ for $\mcg$.  We will gradually ``push" $P$ and $N$ through the terms of these Johnson filtrations in order to capture salient properties. Eventually, we will establish the following chain of containments:
   \[\gamma_2(N)\subseteq\gamma_2(P)\subseteq\iapt\subseteq\iap\subseteq\ian.\]

  Furthermore, we will give the following useful characterization of $P(\surface)$ in terms of the linear central filtrations defined above.
 \begin{propn}[\ref{mainprop} (Characterization of $P$).]\label{charofP}
 Let $g\geq 3$. Then \[P(\surface)=\{x\in\iap \, |\, [x,\iap]\subset\gamma_2(P(\surface))\}.\]
  \end{propn}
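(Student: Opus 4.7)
The plan is to establish the two inclusions separately, with the Johnson filtration serving as the main bridge. For the forward inclusion, first note that $P(\surface) \subseteq \iap$, since pushing the marked point acts trivially on $H_1(\surface;\bbZ)$. The normality of $P(\surface)$ in $\mcg$ gives $[P(\surface), \iap] \subseteq P(\surface)$, and centrality of the Johnson filtration gives $[P(\surface), \iap] \subseteq [\iap, \iap] \subseteq \iapt$. Combining these,
\[
    [P(\surface), \iap] \subseteq P(\surface) \cap \iapt,
\]
so the forward inclusion reduces to the identity $P(\surface) \cap \iapt = \gamma_2(P(\surface))$.

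The containment $\gamma_2(P(\surface)) \subseteq P(\surface) \cap \iapt$ is immediate from centrality. For the opposite containment, I would invoke the first Johnson homomorphism $\tau \colon \iap \to \Lambda^3 H$ (with $H := H_1(\surface;\bbZ)$), whose kernel is exactly $\iapt$. Since $\Lambda^3 H$ is abelian, $\tau|_{P(\surface)}$ factors through the abelianization $P(\surface)/\gamma_2(P(\surface)) \cong H$, and the induced map is the classical wedge-with-symplectic-form $h \mapsto h \wedge \omega$, where $\omega \in \Lambda^2 H$ is the symplectic form. This map is injective for $g \geq 2$, so $\ker(\tau|_{P(\surface)}) = \gamma_2(P(\surface))$, completing the forward inclusion.

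For the reverse inclusion, suppose $x \in \iap$ satisfies $[x, \iap] \subseteq \gamma_2(P(\surface))$. The Birman exact sequence restricts to a short exact sequence of Torelli groups
\[
    1 \to P(\surface) \to \iap \xrightarrow{\pi} \ia(\surface) \to 1,
\]
where $\ia(\surface)$ denotes the Torelli group of the closed surface. Applying $\pi$ to the hypothesis yields $[\pi(x), \ia(\surface)] \subseteq \pi(\gamma_2(P(\surface))) \subseteq \pi(P(\surface)) = 1$, so that $\pi(x) \in Z(\ia(\surface))$. For $g \geq 3$, a classical consequence of Johnson theory is that $\ia(\surface)$ is centerless; hence $\pi(x) = 1$ and $x \in P(\surface)$, as desired.

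The main obstacle is the intermediate identity $P(\surface) \cap \iapt = \gamma_2(P(\surface))$, which requires both the identification $\iap/\iapt \cong \Lambda^3 H$ via the first Johnson homomorphism and the explicit formula $\tau(P_\alpha) = [\alpha] \wedge \omega$ on point-pushes. The hypothesis $g \geq 3$ enters only at the very end, through the triviality of $Z(\ia(\surface))$.
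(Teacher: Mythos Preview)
Your argument is correct, and for the reverse inclusion it takes a genuinely different route from the paper. For the forward inclusion both you and the paper reduce to the identity $P\cap\iapt=\gamma_2(P)$; the paper obtains this as the $k=2$ case of a general statement $P\cap\iapk=\gamma_k(P)$ proved via the associated graded Lie algebra, whereas you give the quicker $k=2$ argument using Johnson's explicit formula $\uptau(Push(\alpha))=[\alpha]\wedge\omega$ and the injectivity of $h\mapsto h\wedge\omega$ on $H$. For the reverse inclusion, the paper argues concretely: given $\phi$ in the right-hand set and any bounding pair map $T_aT_b^{-1}\in\iap$, the hypothesis forces $[\phi,T_aT_b^{-1}]\in P$, hence $T_{\phi(a)}T_{\phi(b)}^{-1}=T_aT_b^{-1}$ in $\mcgg$; comparing canonical reduction systems shows $\phi$ fixes every nonseparating curve in $\surface$, and the Alexander method then gives $\phi\in P$. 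Your approach instead pushes the commutator condition through the Birman sequence restricted to Torelli, landing on $\pi(\phi)\in Z(\ia(\surface))$, and invokes the triviality of that center for $g\geq 3$. Your route is shorter and more conceptual once $Z(\ia(\surface))=1$ is taken as known; the paper's route is more self-contained, since the standard proofs that $\ia(\surface)$ is centerless (e.g.\ commuting with all separating twists or bounding pair maps forces fixing a filling system) are essentially the same Alexander-method computation the paper carries out directly.
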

  By proving Proposition \ref{mainprop} we will also characterize $N$ as \[N=\{x\in\ian\, |\,[x,\ian]\subset\gamma_2(N)\}.\] 
   Notice that Proposition \ref{mainprop} together with the two inclusions $\iap\subset\ian$ and $\gamma_2(N)\subset\gamma_2(P)$ implies the following chain of containments:
 \begin{eqnarray*}
 N&=&\{x\in\ian \, |\, [x,\ian]\subset\gamma_2(N)\}\\
 & \subseteq & \{x\in\iap \, |\, [x,\iap]\subset\gamma_2(N)\}\\
 & \subseteq & \{x\in\iap \, |\, [x,\iap]\subset\gamma_2(P)\}\\
 & = & P.
 \end{eqnarray*}
 
 That is, $N\subseteq P$. Applying the index formula $[N:P]\cdot \chi(\Sigma)=\chi(\hat{\Sigma})$, we can conclude that $N=P$.\\
 
In summary, we divide our proof into the following two main parts:
 \begin{itemize}
 \item \textbf{Sections 1-3:} Demonstrate the chain of containments  \[ \gamma_2(N)\subset\gamma_2(P)\subset\iapt\subset\iap\subset\ian.\] 
 \item  \textbf{Section 4:} Characterize the point-pushing subgroup as \[P=\{x\in\iap \, |\, [x,\iap]\subset\gamma_2(P)\}.\]
 \end{itemize} From these two steps, it follows that N=P.

\subsection{Acknowledgements.} I would like to extend a deep thank you to my advisor Benson Farb for suggesting this project, making extensive comments on an earlier draft, and offering continued guidance. I would also like to thank my advisor Jesse Wolfson for his  insight, advice, and comments on an earlier draft. Thank you to Nick Salter for several helpful conversations as well as corrections and comments on an earlier draft. Thank you to Simion Filip, Victor Ginzburg, Sebastian Hensel,  Dan Margalit,  MurphyKate Montee, Chen Lei, Andy Putman, Emily Smith, and  Wouter van Limbeek for helpful comments and conversations.\\

The following Sections 1 through 5 give a proof of Theorem \ref{maintheorem}. 
\section*{1. Action on homology: $N\subset \iap$.}
\addtocounter{section}{1}
\setcounter{subsection}{0} As above, let $P$ be the point-pushing subgroup of $\mcg$. Let $N\triangleleft\mcg$ be abstractly isomorphic to $\pp$. While $N$ need not act as the point-pushing subgroup on $\surface$, we can choose $N$ to be the point-pushing subgroup of $\mcgn$ for some surface $\hat{\Sigma}_{g,1}$. We will use $\iapk$ to denote the Johnson filtration for $\mcg$, and we will use $\iank$ to denote the Johnson filtration for $\mcgn$. For the remainder of the paper, let $g\geq 3$. \\

 In this section, we will work toward establishing the chain of containments \[\gamma_2(N)\subset\gamma_2(P)\subset\iapt\subset\iap\subset\ian\]   by proving that that $\iap\subset\ian$ and $N\subset\iap$.\\
 
 Let $\beta$ be a loop in $\puncture$ based at the marked point, $x_0$. This loop defines an isotopy from the marked point to itself which can be extended to all of $\puncture$. (For a more precise explanation see e.g. \cite[Setc.4.2]{primer}.) Denote this homeomorphism by $\phi_\beta$. The point-pushing subgroup $ P\triangleleft\mcg$ is exactly the subgroup of isotopy classes of homeomorphisms of the form $\phi_\beta$ for any based loop $\beta$. Let $[\beta]\in \pp$ be the homotopy class of loops containing $\beta$. There is a well-defined map 
						\[Push:\pp\to P\] 
							given by 
						\[Push([\beta])= [\phi_\beta].\]
Birman in \cite{birman} proved that the map $Push$ is an isomorphism.  Because $P$ is normal, $\mcg$ acts on $P$ via conjugation. Alternately, the action of $\mcg$ on $\puncture$ induces an action on the fundamental group $\pp$. The map $Push$ respects the action of $\mcg$. That is, for $\psi\in\mcg$ and $[\beta]\in \pp$
				\begin{equation*}
			\tag{$\dagger$} Push(\psi_{\ast}([\beta]))=\psi Push([\beta])\psi^{-1}.	\end{equation*}
For convenience, we will sometimes equate $P$ with $\pp$. For full details regarding the isomorphism between $P$ and $\pp$ see Section $4.2$ of \cite{primer}.\\

 The point-pushing subgroup $P$ acts by free homotopies on the unmarked surface $\Sigma_g$. As such, $P$ acts trivially on $H_1(\pp;\bbZ)\cong \pp/\gamma_2(\pp)$. That is, $P\subset\iap$. We want to show that $N$ also has the property $N\subset\iap$.\\

Given $\phi\in\mcg$ and $n\in N$, define the map \[\al:\mcg\to\text{Aut}^\pm(N)\cong \mcgnpm\] by \[\al(\phi)(n)=\phi n \phi^{-1}.\] We have the following exact sequences for $P$ and $N$:
 \[
\begin{tikzcd}
1 \arrow{r}{}  & \iap \arrow{r}{} & \mcg \arrow{d}{\al} \arrow{r}{\Psi_\Sigma} & \Sp\arrow[dotted]{d}{\bar{\al}} \arrow{r} & 1\\
1 \arrow{r}{} & \ian\arrow{r} & \text{Aut}^\pm(N) \arrow{r}{\Psi_N} & \Sppm\arrow{r} & 1
\end{tikzcd}\tag{$\ddagger$}
\]
where $\Sppm$ is the subgroup of $\gl$ generated by $\Sp$ and the image of any orientation-reversing homeomorphism. Because all orientation reversing homeomorphisms are nontrivial on $H_1(\Sigma_{g,1};\bbZ)$, we have the equality
\[\ker(\Psi_N)=\ian= \ker(\Psi_N|_{\text{Aut}^{+}}:\mcgn\to\Sp).\] \\
Section 1 is divided into the following steps: \\

\begin{enumerate}[label=1.\Alph*.]
 \item\label{injective} The map $\al$ is injective.\\

\item\label{kernels} The Torelli group $\iap$ is contained in the Torelli group $\ian$.\\
 
\item\label{iso} The map $\bar{\al}$ is an isomorphism onto its image.\\
\end{enumerate}
The containment $N\subset\iap$ will follow easily from part \ref{iso}\\

\textbf{\ref{injective} Injectivity of $\al$.}
The map $\al$ is defined by the conjugation action of $\mcg$ on $N$. Thus, $\ker(\al)$ centralizes $N$. Since $N\cong \pi_1(\Sigma_g)$ has trivial center, $\ker(\al)\cap N=1$.\\ 

Let $\phi\in N\triangleleft\mcg$ be nontrivial. There is some $x\in \pp$ such that $\phi(x)\neq x$. That is, by Equation ($\dagger$), the element $\phi x\phi^{-1}x^{-1}$ of $\mcg$ is nontrivial. However, because both $N$ and $P$ are normal in $\mcg$ it follows that $\phi x\phi^{-1}x^{-1}\in N \cap P$. Therefore, the intersection $N\cap P\not=\emptyset$.\\

Likewise, if $\ker(\al)\not=1$, then there is a nontrivial element of $\ker{\al}\cap P$.\\

The two subgroups $\ker{(\al)}\cap P$ and $P\cap N$ are commuting subgroups of $P$. Because $\ker(\al)\cap N=1$, the intersection $(\ker{(\al)}\cap P)\cap (P\cap N)$ is trivial. However, $x_1,x_2\in \pp$ commute if and only if $x_1=\omega^k$ and $x_2=\omega^m$ for some $\omega\in\pp$ (see, e.g. \cite[Sect.1.1.3]{primer}). For $\ker{(\al)}\cap P$ and $P\cap N$ to intersect trivially and also commute, it must be that $\ker{(\al)}=1$. Therefore $\al$ is injective.\\

\textbf{\ref{kernels} Containment of Torelli groups.} Let $\Psi_\Sigma, \Psi_N,$ and $\al$ be defined as in ($\ddagger$). The following theorem of Korkmaz relates the two homomorphisms $\Psi_\Sigma$ and $\Psi_N\circ\al$.
\begin{thm}\label{kor}(Korkmaz \cite[Thm.1]{korkmaz}). For $g\geq 3$, any group homomorphism $\phi:\mcg\to \text{Gl}_{2g}(\bbC)$ is either trivial or else conjugate to the standard representation $\Psi_\Sigma:\mcg\to\Sp$.
\end{thm}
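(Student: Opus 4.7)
The plan is to exploit the rich generating structure of $\mcg$ by Dehn twists. For $g \ge 3$, the mapping class group is generated by Dehn twists $T_c$ about non-separating simple closed curves, and any two such twists are conjugate in $\mcg$. Consequently, under any homomorphism $\phi\colon \mcg \to \text{GL}_{2g}(\mathbb{C})$, all the matrices $\phi(T_c)$ share a common conjugacy class, hence a common Jordan form $J$. Determining $\phi$ up to conjugacy therefore reduces to pinning down $J$ and then verifying compatibility with the defining relations among Dehn twists.

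The key constraints on $J$ come from standard relations in $\mcg$. First, disjointness of two curves $c,d$ gives $[\phi(T_c),\phi(T_d)] = 1$, which imposes strong commutation restrictions on matrices all conjugate to $J$. Second, the braid relation $T_a T_b T_a = T_b T_a T_b$ for curves intersecting once yields polynomial identities in the eigenvalues of $J$. Third, the lantern relation on a four-holed sphere embedded in $\puncture$ (available since $g \ge 2$) gives a further multi-term equation. Fourth, since $\mcg$ is perfect for $g \ge 3$, we get $\det \phi(T_c) = 1$. Combining these, a case analysis on the eigenvalues of $J$ should force either $J = I$ (in which case $\phi$ is trivial) or else $J$ is conjugate to a single unipotent $2 \times 2$ Jordan block in direct sum with an identity block, i.e.\ the Jordan form of a symplectic transvection.

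Assuming the nontrivial case, the next step is to upgrade the local agreement (Jordan form at each generator) to a global conjugacy with $\Psi_\Sigma$. I would pick a symplectic basis of $H_1(\surface;\bbZ)$ represented by specific non-separating simple closed curves $\{a_1,b_1,\dots,a_g,b_g\}$ and compare $\phi(T_{a_i}), \phi(T_{b_i})$ with their images under $\Psi_\Sigma$. The commutation pattern forced by intersection numbers pins down the one-dimensional invariant line of each transvection up to the symplectic form. Using an explicit finite presentation of $\mcg$ (e.g.\ Wajnryb's or Gervais'), one can then produce an element of $\text{GL}_{2g}(\mathbb{C})$ that simultaneously conjugates $\phi$ to $\Psi_\Sigma$ on all generators, hence globally.

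The main obstacle is ruling out exotic $2g$-dimensional representations that reproduce the correct transvection-like data on generators but differ from $\Psi_\Sigma$ on the Torelli group $\iap$. A priori $\phi|_{\iap}$ could be nontrivial even though $\mcg/\iap \cong \Sp$ embeds via the standard representation. To kill $\phi|_{\iap}$ one must exploit the Johnson theory together with the fact that the smallest faithful complex representation of $\Sp$ has dimension exactly $2g$, so the tight dimension bound leaves no room for a nontrivial extension by a $\Sp$-subrepresentation of $\iap^{\mathrm{ab}}$. Making this dimension-counting argument fully rigorous, and handling the non-semisimple situation carefully, is where the real work lies.
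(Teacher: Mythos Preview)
The paper does not prove this theorem; it is quoted verbatim as a result of Korkmaz \cite[Thm.~1]{korkmaz} and used as a black box in Section~1 to establish $\iap\subset\ian$. There is therefore no proof in the paper to compare your proposal against.

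That said, your sketch is broadly in the spirit of Korkmaz's actual argument: he too exploits that all Dehn twists about nonseparating curves are conjugate, uses the braid, commutation, and lantern relations together with perfectness of $\mcg$ for $g\ge 3$ to constrain the eigenvalues of $\phi(T_c)$, and then builds the global conjugacy. Your final paragraph, however, misidentifies where the difficulty lies. The issue is not really ruling out nontrivial $\phi|_{\iap}$ via Johnson theory or dimension counts against $\Sp$-subrepresentations of $\iap^{\mathrm{ab}}$; Korkmaz never invokes the Torelli group or Johnson homomorphism. Rather, once the Jordan form of $\phi(T_c)$ is pinned down as a transvection, the hard work is a careful linear-algebra analysis showing that the relations among the Humphries generators force the transvection directions to sit in a single symplectic configuration, after which $\phi$ automatically factors through $\Sp$. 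Your ``no room for a nontrivial extension'' heuristic is suggestive but would not by itself close the argument, since $\phi$ need not be semisimple and $\iap$ is not finitely presented in any way that interacts cleanly with $2g$-dimensional linear algebra.
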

 Two homomorphisms $\phi,\psi:G\to H$ are \emph{conjugate} if there exists an element $h\in H$ such that $h\phi h^{-1}=\psi(g)$ for all $g\in G$. Note that conjugate homomorphisms have the same kernel.\\

By Theorem \ref{kor}, the composition \[\Psi_N\circ\al:\mcg\to \Sppm\subset \text{GL}_{2g}(\bbC)\] is either trivial or conjugate to $\Psi_\Sigma$. Thus, the kernel of $\Psi_N\circ\al$ is either all of $\mcg$ or exactly $\iap$. In either case, $\al(\iap)\subset\ker(\Psi_N)=\ian$. Using the injectivity of $\al$ to simplify notation, $\iap\subset\ian$.\\

\textbf{\ref{iso} The map $\bar{\al}$ is an isomorphism.} Using the fact that $\ker(\Psi_\Sigma)\subset\ker(\Psi_N)$, there is a well-defined homomorphism $\bar{\al}:\Sp\to\Sppm$ which makes the diagram $(\ddagger)$ commute.
Note that $\mcg$ contains torsion elements but $\ian$ is torsion-free (see \cite[Sect.2 pp.101)]{hain}). Therefore, $\al(\mcg)\not\subset\ian$, and $\Psi_N\circ\al\not=1$. The commutativity of ($\ddagger$) implies $\bar{\al}\circ \Psi_\Sigma\not=1$. Again applying Theorem \ref{kor}, the image of $\bar{\al}$ must be conjugate to $\Sp$. Therefore $\bar{\al}$ is an isomorphism onto its image.

Because $N\subset \ker(\Psi_N)$ and because the diagram $(\dagger)$ commutes, it follows that $N\subset \ker(\bar{\al}\circ\Psi_\Sigma)$. However, $\ker(\bar{\al})=1$ implies $N\subset\ker(\Psi_\Sigma)$. That is \[N\subset\iap\subset\ian.\]

\section*{2. The second term of the Johnson filtration.}
\addtocounter{section}{1}
\setcounter{subsection}{0}
In this section we will ``push" $P$ and $N$ deeper into the second term of Johnson filtration. We will prove that $N\subset P\cdot \iapt$. To that end, we will prove $N\cdot\iap/\iap=P\cdot\iap/\iap$ by using the Johnson homomorphism and the representation theory of $\Spq$.

\subsection{Johnson filtration of $\mcg$ and lower central series of $P$} In this subsection, we will consider the quotient $P\cdot\iap/\iap\cong P/(P\cap\iap)$. We will prove that $P\cap\iap=[P,P]$. Moreover, we will establish the following general fact:
\begin{prop}\label{mainlemma}$P\cap \iap_k=\gamma_k(P)$ for all $k\geq 1$.
\end{prop}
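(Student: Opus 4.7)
The plan is to translate $P\cap\iapk=\gamma_k(P)$ into a Lie-algebraic statement about the associated graded of $\pp$, and then to establish it by induction on $k$ using the Johnson homomorphism.

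First, by equation $(\dagger)$ the isomorphism $Push\colon\pp\to P$ identifies the action of $P$ on $\pp$ that $\Psi_k$ measures with the natural map $\pp\to\text{Inn}(\pp)$: the element $\Psi_k(Push([\beta]))$ is conjugation by $\overline{[\beta]}$ on $\pp/\gamma_{k+1}(\pp)$. Hence
\[ P\cap\iapk \;=\; Push\bigl(\{[\beta]\in\pp : \overline{[\beta]}\in Z(\pp/\gamma_{k+1}(\pp))\}\bigr). \]
The inclusion $\gamma_k(P)\subseteq P\cap\iapk$ is then immediate from the centrality of the lower central series, since $\gamma_k(\pp)/\gamma_{k+1}(\pp)\subseteq Z(\pp/\gamma_{k+1}(\pp))$.

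For the reverse inclusion I would argue by induction on $k$. The base case $k=1$ is precisely $P\subseteq\iap$, already established in Section 1. For the inductive step, assume $P\cap\iap_{k-1}=\gamma_{k-1}(P)$, and recall the Johnson homomorphism
\[ \tau_{k-1}\colon \iap_{k-1}/\iapk \;\hookrightarrow\; \text{Hom}\bigl(H_1(\surface;\bbZ),\ \gamma_k(\pp)/\gamma_{k+1}(\pp)\bigr). \]
Under the identifications above, the restriction of $\tau_{k-1}$ to $\gamma_{k-1}(P)$ sends $Push([\beta])$ with $[\beta]\in\gamma_{k-1}(\pp)$ to the adjoint map $\bar{x}\mapsto[\overline{[\beta]},\bar{x}]$. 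So the reverse containment reduces to injectivity of
\[ \text{ad}\colon \gamma_{k-1}(\pp)/\gamma_k(\pp)\longrightarrow \text{Hom}\bigl(H_1(\surface;\bbZ),\ \gamma_k(\pp)/\gamma_{k+1}(\pp)\bigr),\quad \bar{y}\mapsto\bigl(\bar{x}\mapsto[\bar{y},\bar{x}]\bigr). \]

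The hard part will be this injectivity. For the free Lie algebra it is the classical theorem of Magnus, but the associated graded $\text{gr}(\pp)=\bigoplus_{j\geq 1}\gamma_j(\pp)/\gamma_{j+1}(\pp)$ is the free Lie algebra on $H_1(\surface;\bbZ)\cong\bbZ^{2g}$ modulo the ideal generated by the degree-$2$ symplectic form $\omega=\sum_{i=1}^g[a_i,b_i]$, and one must rule out the possibility that $\omega$ introduces new elements killed by $\text{ad}$ in some graded piece. I would do this by combining the Jacobi identity with the non-degeneracy of the symplectic form on $H_1(\surface;\bbZ)$, showing that any element of the free Lie algebra whose bracket with every element of $H_1(\surface;\bbZ)$ lies in the ideal $(\omega)$ must already lie in $(\omega)$, and so vanishes in $\text{gr}(\pp)$. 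With this Lie-theoretic input in hand, the proposition follows formally by induction.
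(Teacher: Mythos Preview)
Your reduction is correct and essentially coincides with the paper's. Both arguments translate the proposition into the statement $Z(\pp/\gamma_{k+1}(\pp))=\gamma_k(\pp)/\gamma_{k+1}(\pp)$, and both observe that this follows once one knows that the associated graded Lie algebra $\Lambda=\bigoplus_{j\ge 1}\gamma_j(\pp)/\gamma_{j+1}(\pp)$ has trivial center. Your phrasing via the higher Johnson homomorphisms and the injectivity of $\mathrm{ad}\colon\Lambda_{k-1}\to\mathrm{Hom}(\Lambda_1,\Lambda_k)$ is equivalent to the paper's inductive Step~A, since $\Lambda$ is generated in degree~$1$ and hence a homogeneous element is central if and only if it brackets trivially with $\Lambda_1$.

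The gap is your last paragraph. You assert that ``the Jacobi identity together with the non-degeneracy of the symplectic form'' will show that any $y\in\mathcal{L}_{2g}$ with $[y,\Lambda_1]\subset(\omega)$ already lies in $(\omega)$, but you give no argument, and this is exactly the substantive content of the proposition. It is not at all clear how a bare-hands Jacobi/symplectic computation would run: the Lie ideal $(\omega)$ has infinite rank in every degree above~$2$, and deciding which Lie words lie in it is delicate. The paper does not attempt such a direct argument. Instead it invokes Labute's presentation $\Lambda\cong\mathcal{L}_{2g}/(\omega)$, uses a left-adjoint/cokernel argument to identify the universal enveloping algebra $U(\Lambda)$ with $\mathcal{A}_{2g}/\mathfrak{R}$ (the free associative algebra on $2g$ generators modulo $\sum_i(a_i\otimes b_i-b_i\otimes a_i)$), and then cites the Hochschild cohomology computation $HH^{0}(\mathcal{A}_{2g}/\mathfrak{R})\cong\bbZ$ of Crawley--Boevey--Etingof--Ginzburg to conclude that $U(\Lambda)$, and hence $\Lambda$, has trivial center. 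Your outline becomes a proof only once you supply an actual argument for $Z(\Lambda)=0$; as written, the hard step is a promissory note.
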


To condense notation, let $\gamma_k:=\gamma_k(\pp)$ be the $k$th term of the lower central series. Notice that:
\begin{equation} \label{rewrite}
 \pp\cap \iapk 
=\{x\in\pp\,|\, xyx^{-1}y^{-1}\in\gamma_{k+1}\text{ for all } y\in \pp\}.\end{equation}
That is, $x\in\pp\cap \iapk$ if and only if the left coset  $x\gamma_{k+1}$ is contained in the center $Z(\gamma_1/\gamma_{k+1})$. Thus, Proposition \ref{mainlemma} is equivalent to showing that:
\[Z(\gamma_1/\gamma_{k+1})=\gamma_k/\gamma_{k+1}\hspace{.3cm}\text{for all }k\geq 1.\] We will demonstrate this equality by establishing two containments. The containment $Z(\gamma_1/\gamma_{k+1})\supset \gamma_k/\gamma_{k+1}$ follows from the definition of the lower central series. The opposite containment relies on an analysis of the center of the Lie algebra associated to the lower central series of $\pp$, described below.\\

Associated to the lower central series of any group is a graded Lie. (See e.g. work of Lazard in \cite{lcs}, or Labute  in \cite{Labute1970}. Mal'cev is credited with first using this nilpotent filtration to study groups in \cite{malcev}.) Specifically for $G=\pp$, define 
\[\Lambda_i:=\gamma_i(\pp)/\gamma_{i+1}(\pp)\,\,\,\,\text{ for } i\geq 1. \]  
Each $\Lambda_i$ is a $\bbZ$-module. The sum \[\Lambda:=\bigoplus_i\Lambda_i\] can be given the structure of a graded Lie algebra over $\bbZ$ as follows. Let $(\,,\,)$ be the commutator in $\pp$.  The Lie bracket $[\, ,\,]$ is induced on $\Lambda$ by the commutator. That is, for $x\in \Lambda_k$, $y\in \Lambda_j$, and $\bar{x},\bar{y}$ lifts of $x, y$ respectively to $\pp$, we define
\[[x,y]:=(\bar{x},\bar{y})\gamma_{k+j+1}\in\Lambda_{k+j}. \] Despite the fact that $\pp$ is written as a multiplicative group, $\Lambda_i$ is written additively as a $\bbZ$-vector space. In particular, the left coset $1\gamma_{k+1}$ is $0$ as an element of $\Lambda_k$.\\

To prove Proposition \ref{mainlemma} it remains to show $Z(\gamma_1/\gamma_{k+1})\subset\gamma_k/\gamma_{k+1}$.
 We will divide the proof into two main steps as follows:
 \begin{enumerate}[label=\ref{mainlemma}.\Alph*.]
\item\label{trivialcenter} $\Lambda$ has trivial center $\implies$ $Z(\pi_1/\gamma_{k+1}(\pi_1))\subset \gamma_{k}(\pi_1)/\gamma_{k+1}(\pi_1)$.\\
\item\label{uae} The universal enveloping algebra $U(\Lambda)\cong \mathcal{A}_{2g}/\mathfrak{R}$ where $\mathcal{A}_{2g}$ is the free associative algebra on $2g$ indeterminates, and $\mathfrak{R}$ is the ideal generated by $\sum_{i=1}^g(a_i\otimes b_i-b_i\otimes a_i)$.\end{enumerate}

 To conclude, we will check that $U(\Lambda)$ has trivial center. Because all relations in $\Lambda$ must hold in its universal enveloping algebra, if $U(\Lambda)$ has trivial center, then so does $\Lambda$, and Proposition \ref{mainlemma} follows.

\begin{proof}[Proof of Proposition \ref{trivialcenter}\nopunct]
Let $x\in Z(\gamma_1/\gamma_{k+1})$. Suppose for the sake of contradiction $x\notin\Lambda_k$. We will show that $x\in Z(\Lambda)$.\\
 
There is some smallest $i\geq 1$ such that $x\in\gamma_{k-i}/\gamma_{k+1}$. Let $y\in \Lambda_j$. In order to show that $x$ is central, because the Lie bracket is bilinear, it suffices to check that $[x,y]=0$ for any $y$ and any $j$. That is, the commutator $(\bar{x},\bar{y})\in\gamma_{k-i+j+1}$.\\
  
  Consider the two cases: either $1\leq j\leq i$ or $1\leq i<j$.\\
  
   First, let $1\leq j\leq i$.
 Notice, for any $\bar{y}\in \pp$, the commutator $(\bar{x},\bar{y})\in \gamma_{k+1}$ because $x\in Z(\gamma_1/\gamma_{k+1})$. Since $j\leq i$, it follows that $\gamma_{k+1}\subset{\gamma_{k-i+j+1}}$. Thus, \[(\bar{x},\bar{y})\in\gamma_{k+1}\subset{\gamma_{k-i+j+1}}.\] Therefore, if $y\in \Lambda_{j}$ for $j\leq i$ then \[[x,y]=1\gamma_{k-i+j+1}=0\in \Lambda_{k-i+j}.\]\\
 \item[] Otherwise, let $1\leq i<j$. We will prove $(\bar{x},\bar{y})\in\gamma_{k-i+j+1}$ by induction on $j$. Suppose first $j=2$ (forcing $i=1$). Without loss of generality we may assume $\bar{y}=(\bar{a},\bar{b})$. The Jacobi identity provides
 \[[x,y]=[x,[a,b]]=-[b,[x,a]]-[a,[b,x]].\]
 However, $(\bar{x},\bar{a}),(\bar{b},\bar{x})\in\gamma_{k+1}$ because $x\in Z(\gamma_1/\gamma_{k+1})$. This implies \[-(b,(x,a))-(a,(b,x))\in \gamma_{k+2}=\gamma_{k-1+2+1}=\gamma_{k-i+j+1}.\] Therefore, $[x,y]=0$.
\\

To complete the induction, let $M<k$. Assume if $i\leq j\leq M$, then $(\bar{x},\bar{y})\in\gamma_{k-i+j+1}$ for all $y\in\Lambda_j$. Suppose $y\in \Lambda_{M+1}$. Without loss of generality we may assume that $y$ is an ($M+1$)-fold commutator, i.e. $y=(\bar{a},\bar{b})\gamma_{M+2}$ for some $a\in\Lambda_1$ and $b\in \Lambda_{M}$. By assumption, $(\bar{x},\bar{a})\in\gamma_{k+1}$, which implies \[(\bar{b},(\bar{x},\bar{a}))\in \gamma_{k+1+M+1}\subset\gamma_{k-i+M+1}.\] By the inductive hypothesis, $(\bar{b},\bar{x})\in\gamma_{k-i+M+1}$. Therefore, \[(\bar{x},(\bar{a},\bar{b}))=-(\bar{b},(\bar{x},\bar{a}))-(\bar{a},(\bar{b},\bar{x}))\in \gamma_{k-1+M+1}\subset \gamma_{k-i+M+1}\] 
implying that $[x,y]=0$ for all $y\in \Lambda$.

 Therefore, $x\neq 0$ is central in $\Lambda$. This proves the implication
\begin{equation} \label{centerlessimplies} Z(\Lambda)=0\Rightarrow Z(\gamma_1/\gamma_{k+1}) \subset\gamma_{k}/\gamma_{k+1}\end{equation}  \end{proof} 

\begin{proof}[Proof of Proposition \ref{uae}\nopunct] The following theorem of Labute shows that the graded Lie algebra $\Lambda$ is a quotient of the free Lie algebra on $2g$ generators by a principal ideal. \begin{thm} (Labute \cite{Labute1970}).
Let $\mathcal{L}_{2g}$ be the free Lie algebra on $2g$ generators (denoted $a_1,b_1,\dots,a_g,b_g$). Let $\mathcal{R}$ be the ideal generated by $\sum_i[a_i,b_i]$. Then $\Lambda\cong\mathcal{L}_{2g}/\mathcal{R}$.
\end{thm}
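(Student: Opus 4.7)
The plan is to construct a natural surjection $\bar\phi:\mathcal{L}_{2g}/\mathcal{R}\twoheadrightarrow\Lambda$ and then prove injectivity by computing and matching Hilbert series on the two sides.

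First, send each generator $a_i,b_i$ of $\mathcal{L}_{2g}$ to the corresponding class in $\Lambda_1=\pp/\gamma_2(\pp)=H_1(\surface;\bbZ)$. By the universal property of the free Lie algebra, this extends to a Lie algebra homomorphism $\phi:\mathcal{L}_{2g}\to\Lambda$. Since $\pp$ is generated by $\{a_i,b_i\}$, each $\gamma_k/\gamma_{k+1}$ is spanned as an abelian group by $k$-fold iterated commutators in the generators, so $\phi$ is surjective. The surface relation $\prod_i[a_i,b_i]=1$ in $\pp$ lies trivially in $\gamma_3$, and since $\gamma_2/\gamma_3$ is abelian the multiplicative product in $\gamma_2$ corresponds to an additive sum in $\Lambda_2$, giving $\sum_i[a_i,b_i]=0\in\Lambda_2$. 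Hence $\mathcal{R}\subseteq\ker\phi$, producing the surjection $\bar\phi$.

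The main work is injectivity. I would invoke the Magnus expansion $\mu:F_{2g}\hookrightarrow\mathcal{A}_{2g}^{\wedge}$ into noncommutative power series sending each free generator $x_i$ to $1+X_i$, which is injective and identifies the augmentation-filtration associated graded $\mathrm{gr}\,\bbZ[F_{2g}]$ with $\mathcal{A}_{2g}$. Under $\mu$ the leading term of $\prod_i[a_i,b_i]-1$ is precisely $w:=\sum_i(a_ib_i-b_ia_i)$, so $\mu$ descends to a filtered ring map $\bar\mu:\bbZ[\pp]\to\mathcal{A}_{2g}^{\wedge}/\mathfrak{R}$. The crucial Labute input is that the surface relator is \emph{inert}: the quadratic element $\sum_i[a_i,b_i]\in\mathcal{L}_{2g}$ is not a proper power and is symplectically non-degenerate, so the two-sided ideal $\mathfrak{R}$ generated by $w$ behaves as a free module relation. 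Consequently, $\bar\mu$ induces an isomorphism on associated graded pieces $\mathrm{gr}\,\bbZ[\pp]\cong\mathcal{A}_{2g}/\mathfrak{R}$. By Quillen's theorem $\mathrm{gr}\,\bbZ[\pp]\cong U(\Lambda)$ (using that $\pp$ is residually nilpotent with torsion-free lower central quotients), so $U(\Lambda)\cong\mathcal{A}_{2g}/\mathfrak{R}$ --- precisely the companion statement \ref{uae} --- and Poincar\'e--Birkhoff--Witt lets us extract $\Lambda\cong\mathcal{L}_{2g}/\mathcal{R}$ from the enveloping algebra isomorphism.

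The principal obstacle is verifying the inertness condition, equivalently the exactness of the Koszul-type resolution
\[0\to\mathcal{A}_{2g}\xrightarrow{\cdot w}\mathcal{A}_{2g}\to\mathcal{A}_{2g}/\mathfrak{R}\to 0.\]
This is the technical heart of Labute's argument: one must show $w$ is not a left zero-divisor in $\mathcal{A}_{2g}$, which uses symplectic nondegeneracy of $w$ in an essential way. The exact sequence together with the PBW-computed Hilbert series of $\mathcal{L}_{2g}$ determines the Hilbert series of $\mathcal{L}_{2g}/\mathcal{R}$ in each degree, and matching this with the Hilbert series of $\Lambda$ (computed from the Magnus side) forces $\bar\phi$ to be an isomorphism in every degree.
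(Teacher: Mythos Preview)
The paper does not prove this statement: it is quoted as a black-box result of Labute \cite{Labute1970}, and the paper then \emph{derives} the enveloping-algebra isomorphism $U(\Lambda)\cong\mathcal{A}_{2g}/\mathfrak{R}$ (Proposition \ref{uae}) from it using the fact that $U$ is a left adjoint and hence preserves cokernels. Your proposal runs in the opposite logical direction, attempting to establish $U(\Lambda)\cong\mathcal{A}_{2g}/\mathfrak{R}$ first via Magnus/Quillen and then extract Labute's theorem from PBW.

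Your outline is broadly faithful to how Labute-type theorems are proved, and you correctly isolate the inertness of $w=\sum_i(a_ib_i-b_ia_i)$ (equivalently, $w$ is a non-zero-divisor in $\mathcal{A}_{2g}$) as the technical core. However, there is a circularity you should address. You invoke Quillen's theorem $\mathrm{gr}\,\bbZ[\pp]\cong U(\Lambda)$ over $\bbZ$, explicitly assuming that the lower-central quotients $\gamma_k/\gamma_{k+1}$ are torsion-free. But torsion-freeness of these quotients for a one-relator group is precisely one of the conclusions of Labute's theorem, not an independent hypothesis; Labute obtains it simultaneously with the isomorphism $\Lambda\cong\mathcal{L}_{2g}/\mathcal{R}$ via his basic-commutator argument. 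Likewise, your final PBW step (recovering the Lie algebra from its enveloping algebra) over $\bbZ$ needs $\Lambda$ to be $\bbZ$-free. So as written, the argument assumes part of what it is meant to prove. One clean fix is to run the entire Hilbert-series comparison over $\bbQ$ first (where Quillen and PBW are unproblematic), obtain $\Lambda\otimes\bbQ\cong(\mathcal{L}_{2g}/\mathcal{R})\otimes\bbQ$, and then use the surjection $\bar\phi$ together with the integral inertness exact sequence to show that each graded piece of $\mathcal{L}_{2g}/\mathcal{R}$ is a free $\bbZ$-module of the correct rank, forcing $\bar\phi$ to be an integral isomorphism and yielding torsion-freeness of $\Lambda$ as a consequence rather than an input.
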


 Let $T(\Lambda)$ be the tensor algebra on the vector space underlying $\Lambda$. Let $U(\Lambda)$ be the universal enveloping algebra. Define $U(\Lambda)$ as \[U(\Lambda):= T(\Lambda)/\langle a\otimes b-b\otimes a -[a,b]\rangle.\] Let $\mathcal{A}_{2g}$ be the free associative algebra on $2g$ indeterminates. Let $\mathfrak{R}$ be the ideal in $\mathcal{A}_{2g}$ generated by $\sum_{i=1}^g(a_i\otimes b_i-b_i\otimes a_i)$.  We will prove below that the universal enveloping algebra $U(\Lambda)$ is isomorphic to $\mathcal{A}/\mathfrak{R}$.\\
 
The analogous fact for free groups, $U(\mathcal{L}_n)\cong\mathcal{A}_n$, was established by Magnus-Karrass-Solitar, see \cite[pp.347 ex.5]{MKS}. Let $U:\mathcal{L}\to \mathcal{Assoc}$ be the functor from the category of Lie algebras to the category of associative algebras that takes a Lie algebra to its universal enveloping algebra. 

Let $G: \mathcal{A}\to \mathcal{L}$ be the functor from the category of associative algebras to the category of Lie algebras, that induces the Lie bracket by the commutator in the associative algebra. $U$ is left-adjoint to $G$.

Define an injection
\[\phi:\mathcal{L}_1\to\mathcal{L}_{2g}\]
via\[\phi(1):=\sum_i[a_i,b_i].\]

Notice that: \[U(\mathcal{L}_1\stackrel{\phi}{\longrightarrow}\mathcal{L}_{2g})=\mathcal{A}_1\stackrel{U(\phi)}{\longrightarrow}\mathcal{A}_{2g}.\]

The map $U(\phi)$ is the injective map defined by $U(\phi)(1):= \sum(a_i\otimes b_i-b_i\otimes a_i)$. Note, coker$(\phi)\cong\Lambda$ and coker$(U(\phi))\cong\mathcal{A}_{2g}/\mathfrak{R}$. Since $U$ is left-adjoint to $G$ it preserves cokernels, meaning $U(\text{coker}(\phi))\cong \text{coker}(U(\phi))$. Therefore $U(\Lambda)\cong \mathcal{A}_{2g}/\mathfrak{R}$.\end{proof}

 In order to show that $\Lambda$ is centerless, it is sufficient to show that the universal enveloping algebra, $U(\Lambda)$ is centerless.
A computation of Crawley-Boevey-Etingof-Ginzburg in  \cite[Thm.8.4.1(ii)]{ginzburg} shows that the Hochschild cohomology $HH^{0}(\mathcal{A}_{2g}/\mathfrak{R})\cong\bbZ$. For an associative algebra $\mathcal{A}$, the center $Z(\mathcal{A})=HH^{0}(\mathcal{A})$ (see e.g. \cite[Sect.9.1.1]{weibel}). That is, only $\bbZ$ is central in the associative $\bbZ$-algebra $\mathcal{A}/\mathfrak{R}=U(\Lambda)$. All relations in $\Lambda$ must be preserved in $U(\Lambda)$. Thus, $Z(\Lambda)=0$.\\

Because $Z(\Lambda)=0$, it follows from Equation (\ref{centerlessimplies}) that  $Z(\gamma_1/\gamma_k)\subset\gamma_k/\gamma_{k+1}$. Thus, $Z(\gamma_1/\gamma_k)=\gamma_k/\gamma_{k+1}$. Then, by Equation (\ref{rewrite}) it follows that $P\cap\ia_k=\gamma_k(P).$

In particular, we have shown that $P\cap\iapt=\gamma_2(P)$, and equivalently $N\cap\iant=\gamma_2(N)$.

\subsection{Abelizanization of $\iap$}
We have already established that: \begin{eqnarray*}N\cap[N,N]&\subseteq& N\cap[\iap,\iap]\\ &\subseteq& N\cap[\ian,\ian]\\ &\subseteq& N\cap\iant\\&=&[N,N].\end{eqnarray*} The first containment follows from $N\subset\iap$ (Section 1). The second containment follows from $\iap\subset\ian$ (Section 1). Johnson's work showing that $\ian/\iant$ is abelian implies the third containment. The final equality is a consequence of Proposition \ref{mainlemma}. To conclude that $N\cap\iapt=[N,N]$ it suffices to check that $\iapt\subset\iant$. To establish this containment, we need to study the Johnson homomorphism and Johnson filtration. \\

 Let $\boundary$ be a compact surface of genus $g$ with one boundary component. Let $\mcgb$ be the group of isotopy classes of orientation preserving homeomorphisms of $\boundary$ fixing the boundary pointwise. Define the Torelli group for a surface with boundary as \[\iab:=\ker(\Psi:\mcgb\to\Sp)\] where $\Psi$ is the standard symplectic representation. For emphasis, we will sometimes distinguish as $\iast$ the Torelli group for a once marked surface. Unless otherwise specified $\ia=\iast$.
 Let $x\in H_1(\boundary;\bbZ)$, let $\bar{x}\in\po(\boundary)/\gamma_3(\po(\boundary))$ be a representative of $x$. Let $\phi\in\iab$. The Johnson homomorphism for $\ia_g^1$ is
 \[\uptau_g^1:\iab\to \text{Hom}(H_1(\boundary;\bbZ),\gamma_2(\po(\boundary))/\gamma_3(\po(\boundary))\]
 given by  
\[\uptau_g^1(\phi)(x)=\phi(\bar{x})\bar{x}^{-1}.\]
Many properties of $\iast$ follow directly from the properties of $\iab$. In a series of papers, Johnson established several important results summarized in the following theorem:
\begin{thm}[(Johnson)]\label{johnson} Let the notation be as above. For $g\geq 3$, the following hold: \begin{enumerate}[label=\Alph*.]
\item $Im(\uptau_g^1)\cong\Lambda^3H_1(\boundary;\bbZ)$  \cite{johnsonsurvey}. 
\item $H_1(\iab;\bbQ)\cong\Lambda^3H_1(\boundary;\bbQ)$ \cite{johnsonstructure1, johnsonstructure3, johnsonsurvey}.
\item  $\ker(\uptau_g^1)=(\iab)_2$ see, e.g. \cite[Th.6.18]{primer}.
\item  $\uptau_g^1:\iab/(\iab)_2\to\Lambda^3H_1(\boundary;\bbZ)$ is an $\Sp$-equivariant isomorphism, see, e.g. \cite[Eq.6.1]{primer}.
\item The quotient $\iab/(\iab)_2$ is the universal torsion-free abelian quotient of $\iab$ see \cite{johnsonsurvey, abelianquotient} or e.g. \cite[Sect.6.6.3]{primer}).
\end{enumerate}
 \end{thm}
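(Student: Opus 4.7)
The plan is to define the Johnson homomorphism carefully and then derive the five parts in order of increasing depth. Given $\phi\in\iab$ and $x\in H_1(\boundary;\bbZ)$ with lift $\bar x\in\po(\boundary)/\gamma_3(\po(\boundary))$, the hypothesis that $\phi$ acts trivially on $H_1=\po/\gamma_2$ forces $\phi(\bar x)\bar x^{-1}$ to lie in $\gamma_2/\gamma_3$. A short check (a different choice of lift differs by an element of $\gamma_2/\gamma_3$, which is itself fixed modulo $\gamma_3$ by any $\phi\in\iab$) shows that $\uptau_g^1(\phi)$ is well-defined and is a homomorphism in $x$. Part C is then nearly tautological: $\phi\in\ker(\uptau_g^1)$ iff $\phi(\bar x)\equiv\bar x\pmod{\gamma_3}$ for every $x$, which is exactly the condition defining $(\iab)_2$.

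For Part A, the first ingredient is the identification $\gamma_2(\po(\boundary))/\gamma_3(\po(\boundary))\cong\Lambda^2 H_1(\boundary;\bbZ)$, which holds because $\po(\boundary)$ is free and the associated graded of its lower central series is therefore the free Lie algebra on $H_1$. Combined with the symplectic duality $H_1\cong H_1^*$, this places $\mathrm{Im}(\uptau_g^1)$ inside $H_1\otimes\Lambda^2 H_1$, and a direct calculation using commutator identities in $\po(\boundary)$ shows that the resulting tensor is alternating in three $H_1$-variables, hence lies in the subspace $\Lambda^3 H_1$. Surjectivity is the substantive content: one computes $\uptau_g^1$ explicitly on bounding-pair maps, which are known to generate $\iab$ for $g\geq 3$, and checks that these images span $\Lambda^3 H_1$. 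Part D follows at once from A and C, with $\Sp$-equivariance coming from the naturality of $\uptau_g^1$ under $\mcgb$-conjugation.

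The main obstacle is Part B: showing $H_1(\iab;\bbQ)$ is \emph{no larger} than $\Lambda^3 H_1(\boundary;\bbQ)$. The lower bound is supplied by $\uptau_g^1$ together with Part A; the upper bound is the deep one. Johnson's strategy is to exploit the $\Sp$-action on $H_1(\iab;\bbQ)$ (inherited from the $\mcgb$-conjugation action on $\iab$), decompose into $\Sp$-irreducibles using representation theory, and then rule out all summands beyond $\Lambda^3 H_1$ using explicit relations among bounding-pair generators. Part E is the integral refinement: one must locate the $2$-torsion in $H_1(\iab;\bbZ)$ (which comes from the Birman--Craggs--Johnson homomorphism and ultimately the Rochlin invariant) and verify that $\uptau_g^1$ is universal among torsion-free abelian quotients. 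Since these are landmark results of Johnson, in practice I would simply invoke \cite{johnsonsurvey}, \cite{johnsonstructure1}, \cite{johnsonstructure3}, and \cite{abelianquotient} rather than reprove them here, and proceed to apply the theorem to deduce $\iapt\subset\iant$ in the next step.
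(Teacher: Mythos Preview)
Your proposal is essentially aligned with the paper: the paper does not prove Theorem~\ref{johnson} at all but simply records Johnson's results with citations, and you ultimately do the same after sketching the underlying ideas. Your outline of the mechanisms (the free Lie algebra identification for Part~A, the tautological nature of Part~C, the $\Sp$-representation-theoretic upper bound for Part~B, and the Birman--Craggs torsion for Part~E) is accurate and goes beyond what the paper itself provides, but since both you and the paper defer to \cite{johnsonsurvey,johnsonstructure1,johnsonstructure3,abelianquotient} for the actual arguments, there is nothing further to compare.
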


To condense notation, let \begin{eqnarray*}
H_\bbZ&:=&H_1(\boundary;\bbZ),\\
H_\bbQ&:=&H_1(\boundary;\bbQ),\\
\pi_1&:=&\pi_1(\Sigma_{g}).
\end{eqnarray*}  
We can define the Johnson homomorphism for $\iast$ as follows.
 Let $x\in H_\bbZ$, let $\bar{x}$ a representative of $x$ in $\po$, and $\phi\in\ia$. Define \[\uptau:\iast\to\text{Hom}(H_\bbZ,\gamma_2(\pi_1)/\gamma_3(\pi_1))\] by
 \[\uptau(\phi)(x)=\phi(\bar{x})\bar{x}^{-1}.\]

The map $\uptau$ is well-defined by Proposition \ref{mainlemma}. 
 Let $T_\partial$ be the Dehn twist about the boundary curve of $\boundary$. The fact that $T_\partial\in\ker(\uptau_g^1)$ 
  implies that $\uptau_g^1: \iab\to \Lambda^3H_\bbZ$ factors through $\iast$ (see \cite{johnsonsurvey}). As such, Theorem \ref{johnson} A-E holds for $\iast$ and $\uptau$.\\

By showing that $\iapt\subset\iant$, we will conclude that: \[[N,N]=N\cap[\iap,\iap]=N\cap\iapt=N\cap\iant=[N,N].\] 

\begin{rem} The quotient $\iap/\iapt$ differs from the universal abelian quotient of $\iap$ only in torsion. That is,
\begin{eqnarray*}\iap/\iapt\otimes \bbQ&\cong& H_1(\iap;\bbQ)\\ &\cong& \iap/[\iap,\iap]\otimes \bbQ.\end{eqnarray*} Therefore, $[\iap,\iap]\subset \iapt$ and the quotient $\iapt/[\iap,\iap]$ is isomorphic to the torsion subgroup of $\iap/[\iap,\iap]$.
\end{rem}

To see that $\iapt\subset\iant$, we will consider the difference between $H_1(\ia;\bbZ)$ and $H_1(\ia,\bbQ)$.
 
The abelianization, $H_1(\iast;\bbZ)$, can be computed using techniques employed by Johnson in \cite{johnsonstructure3} to compute $H_1(\iab;\bbZ)$. We could not find this exact computation in the literature, so we give it below.\\

\begin{prop}\label{computeH1}
$H_1(\iast;\bbZ)\cong \Lambda^3 H_\bbZ\oplus \mathcal{B}_2/\langle a\rangle$ where $\mathcal{B}_2/\langle a\rangle$ is 2-torsion (defined explicitly below).
\end{prop}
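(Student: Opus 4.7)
The plan is to derive $H_1(\iast;\bbZ)$ from Johnson's computation of $H_1(\iab;\bbZ)$ via the Birman-type exact sequence that relates the Torelli group of the bordered surface to that of the marked surface. Johnson showed in \cite{johnsonstructure3} that
\[H_1(\iab;\bbZ) \cong \Lambda^3 H_\bbZ \oplus \mathcal{B}_2,\]
where the torsion-free summand is the image of the Johnson homomorphism $\uptau_g^1$ and $\mathcal{B}_2$ is a group of Boolean polynomials arising from the Birman--Craggs homomorphisms (ultimately tied to the Rochlin invariant). I would import this computation wholesale and then quotient down to $\iast$.

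To pass from $\iab$ to $\iast$, I would cap the boundary of $\boundary$ with a disk carrying the marked point. This yields the short exact sequence
\[1 \longrightarrow \langle T_\partial \rangle \longrightarrow \mcgb \longrightarrow \mcg \longrightarrow 1,\]
where $T_\partial$ is the Dehn twist about a curve parallel to $\partial\boundary$. Two facts are doing all the work: first, $T_\partial$ is central in $\mcgb$, being supported in a collar of the boundary; second, $T_\partial$ lies in $(\iab)_2$, since it acts trivially on $\po(\boundary)/\gamma_3(\po(\boundary))$ (its core curve bounds once capped off). The first fact shows that restricting to Torelli gives the \emph{central} extension
\[1 \longrightarrow \langle T_\partial \rangle \longrightarrow \iab \longrightarrow \iast \longrightarrow 1,\]
and the second puts $[T_\partial]$ in the kernel of the Johnson projection at the abelian level.

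Feeding this central extension into the five-term exact sequence in group homology, and using that $\langle T_\partial\rangle$ acts trivially on itself, yields
\[H_1(\iast;\bbZ) \cong H_1(\iab;\bbZ) / \langle [T_\partial] \rangle.\]
Since $T_\partial \in \ker(\uptau_g^1)$, the class $[T_\partial]$ projects to $0$ in $\Lambda^3 H_\bbZ$ under Johnson's splitting and therefore lies entirely in the 2-torsion summand $\mathcal{B}_2$. Writing $a := [T_\partial] \in \mathcal{B}_2$ gives
\[H_1(\iast;\bbZ) \cong \Lambda^3 H_\bbZ \oplus \mathcal{B}_2/\langle a \rangle,\]
as claimed.

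The main obstacle is making the class $a \in \mathcal{B}_2$ fully explicit --- that is, pinning down which Boolean polynomial $[T_\partial]$ represents under Johnson's description of $\mathcal{B}_2$. This amounts to running $T_\partial$ through the Birman--Craggs homomorphisms; Johnson essentially computed the classes of such separating twists in \cite{johnsonstructure3}, so the identification reduces to tracing his formulas for a bounding simple closed curve that cobounds an annulus with $\partial\boundary$. Once $a$ is named explicitly, the proposition follows directly from the displayed isomorphism above, and this is how the ``defined explicitly below'' parenthetical in the statement gets filled in.
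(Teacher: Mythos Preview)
Your proposal is correct and follows essentially the same route as the paper: both arguments reduce to computing $H_1(\iast;\bbZ)\cong H_1(\iab;\bbZ)/\langle[T_\partial]\rangle$ and then observing that $[T_\partial]$ lies in the torsion summand $\mathcal{B}_2$ (since $T_\partial\in\ker(\uptau_g^1)$), with the explicit identification $a=\sigma(T_\partial)=\sum_i a_ib_i$ imported from Johnson. The only cosmetic difference is that the paper packages the quotient computation via pullback diagrams (D1, D2, D3) rather than invoking the five-term sequence for the central extension, but the substance is identical.
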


A \emph{boolean polynomial} is a polynomial with coefficients in $\bbZ/2\bbZ$. Define $\mathcal{B}_i$ to be the group of boolean polynomials $p$  on $2g$ indeterminates with $\deg(p)\leq i$. Building on the work of Birman-Craggs in \cite{birmancraggs}, Johnson constructed in \cite[Th.6]{johnsonbirman} (see also e.g. \cite[Th.6.19]{primer}) a surjective homomorphism \[\sigma:H_1(\iab;\bbZ)\to \mathcal{B}_3\] such that the torsion of $H_1(\iab;\bbZ)$ is captured by $\mathcal{B}_2$. In addition, Johnson constructed the surjective $\Sp$-equivariant homomorphism
\[q:\mathcal{B}_3\to \Lambda^3H_\bbZ\otimes \bbZ/2\bbZ;\] for details see \cite{johnsonstructure3}[Prop.4]. Explicitly, Johnson computed $H_1(\iab;\bbZ)\cong \Lambda^3H\oplus \mathcal{B}_2$ using these two homomorphisms and pullback diagrams of groups. A \emph{pullback diagram} for the group homomorphisms $\psi_1:A\to C$ and  $\psi_2:B\to C$ is
\begin{equation}\label{pulldi}\begin{tikzcd}
{}
& D \arrow{dr}{\phi_1} \arrow{dl}[swap]{\phi_2} \\
A \arrow[swap]{dr}{\psi_1} & & 
B \arrow{dl}{\psi_2} \\
& C
\end{tikzcd} 
\end{equation}
 a commutative square (\ref{pulldi}) that is terminal among all such squares. That is, the pullback $(D,\phi_1,\phi_2)$ is universal with respect to the diagram (\ref{pulldi}). For a diagram of groups, the pullback is \[D\cong\{(a,b)\in A\times B\,|\, \psi_1(a)=\psi_2(b)\}.\] $D$ is unique up to canonical isomorphism.\\

Diagram D1 (below) is a pullback diagram, from which Johnson in \cite{johnsonstructure3} concludes that $H_1(\iab;\bbZ)\cong \Lambda^3H\oplus \mathcal{B}_2$.
\begin{center}
\begin{tikzcd}
{}
& H_1(\iab,\bbZ) \arrow{dr}{\uptau} \arrow{dl}[swap]{\sigma} \\
\mathcal{B}_3 \arrow[swap]{dr}{q} & & 
\Lambda^3H_\bbZ\arrow{dl}{\otimes \bbZ/2\bbZ} \\
& \Lambda^3H_\bbZ\otimes \bbZ/2\bbZ
\end{tikzcd}\\
\textbf{D1:} Pullback diagram used to compute $H_1(\iab;\bbZ)\cong \Lambda^3H_\bbZ\oplus B_2$.\\

\end{center}

Let $T_\partial$ be the Dehn-twist about the boundary component in $\iab$.  In order to compute $H_1(\iast;\bbZ)$ note that: \[\iast\cong\iab/\langle T_\partial\rangle .\]   
Define $a\in\mathcal{B}_2$ as \[a:=\sum_i a_ib_i.\] In \cite{johnsonstructure3}, Johnson computes \[\sigma(T_\partial)=a.\]

\begin{proof}[Proof of Proposition \ref{computeH1}]
We will use two additional pullback diagrams to compute  $H_1(\iast;\bbZ)\cong \Lambda^3 H_\bbZ\oplus \mathcal{B}_2/\langle a\rangle$. Define the quotient map
\[f:\ia_g^1\to \ia_g^1/\langle T_\partial\rangle\cong \iast.\]
 The inverse image of the commutator subgroup of $\iast$ is 
\[f^{-1}([\iast,\iast])=f^{-1}(([\iab,\iab]\cdot\langle T_\partial\rangle)/\langle T_\partial\rangle )=[\iab,\iab]\cdot \langle T_\partial\rangle.\]
Define the quotient map  \[g:\iab\to \frac{(\iab/\langle T_\partial \rangle)}{(([\iab,\iab]\cdot \langle T_\partial\rangle)/\langle T_\partial\rangle)}\cong H_1(\iast;\bbZ) .\]
The kernel of $g$ is exactly $[\iab,\iab]\cdot \langle T_\partial \rangle $. Thus, there is an isomorphism 
\[g:\iab/([\iab,\iab]\cdot \langle T_\partial \rangle )\to H_1(\iast;\bbZ).\]
 
Notice that: \[H_1(\iab;\bbZ)\twoheadrightarrow\frac{(\iab/[\iab,\iab])}{([\iab,\iab]\cdot\langle T_\partial\rangle/[\iab,\iab] )}\cong \frac{\iab}{([\iab,\iab]\cdot \langle T_\partial\rangle)}.\] Therefore we have a map  $H_1(\iab;\bbZ)\to H_1(\iast;\bbZ)$ with kernel $[\iab,\iab]\cdot \langle T_\partial \rangle /[\iab,\iab]$. Construct the following pullback diagrams D2 and D3:\\

\begin{center}
\begin{tikzcd}
{}
& (\langle T_\partial\rangle\cdot[\iab,\iab])/[\iab,\iab]  \arrow{dr}{\uptau} \arrow{dl}[swap]{\sigma} \\
\langle a \rangle \arrow[swap]{dr}{} & & 
1\arrow{dl}{} \\
& 1
\end{tikzcd}\\
\textbf{D2:} Because $\sigma$ is an isomorphism, this is a pullback diagram.\\
\end{center}

Taking a quotient of D1 by D2 results in the following pullback diagram D3:\\

\begin{center}
\begin{tikzcd}
{}
& H_1(\iast,\bbZ) \arrow{dr}{\uptau} \arrow{dl}[swap]{\sigma} \\
B_3/\langle a\rangle \arrow[swap]{dr}{q} & & 
\Lambda^3H_\bbZ\arrow{dl}{\otimes\bbZ/2\bbZ} \\
& \Lambda^3H_\bbZ\otimes \bbZ/2\bbZ
\end{tikzcd}\\
\textbf{D3:} The pullback diagram quotient of D1 by D2. Diagram D3 can be used to compute $H_1(\iast;\bbZ)\cong\Lambda^3H\oplus \mathcal{B}_2/\langle a \rangle$.\\
\end{center}

 Johnson showed that D1 is a pullback diagram in \cite{johnsonstructure3}. D2 is a pullback diagram because $\langle a \rangle \cong (\langle T_\partial\rangle[\iab,\iab])/[\iab,\iab]\cong \bbZ/2\bbZ$. Since D3 is a quotient of two pullback diagrams and one terminal homomorphism of D1 is surjective, it follows that D3 is also a pullback diagram. Therefore, $H_1(\iast;\bbZ)\cong \Lambda^3H\oplus \mathcal{B}_2/\langle a\rangle$.\end{proof}
 
\subsection{Intersection of $N$ with $\iapt$}\label{Niniapt}
 The homomorphism $\al$, as defined in Section 1, gives the injection $\mcg\hookrightarrow\text{Aut}^\pm(N)$. From Section 1, the containment $\iap\subset\ian$ implies $[\iap,\iap]\subset[\ian,\ian]$. Define $\bar{\uptau}_P$ (respectively, $\bar{\uptau}_N$) as the quotient map \[\bar{\uptau}_P:\iap\to \iap/[\iap,\iap]\cong\Lambda^3H_\bbZ\oplus\mathcal{B}_2/\langle a\rangle.\]  Since $[\iap,\iap]\subseteq[\ian,\ian]$ it follows that $\ker(\bar{\uptau}_P)\subset\ker(\bar{\uptau}_N)$. Thus, we can define a homomorphism $\tilde{\al}$ so that the right hand square of (\ref{torsiondia}) commutes.
  \begin{equation}\label{torsiondia}
   \begin{tikzcd}
 1 \arrow{r}{}  & \left[\iap,\iap\right] \arrow{d}{\al} \arrow{r}{} & \iap\arrow{r}{\bar{\uptau}_p}\arrow{d}{\al} & \Lambda^3H_\bbZ\oplus B_2/\langle a\rangle \arrow{d}{\tilde{\al}}\arrow{r} & 1\\
 1 \arrow{r}{}  & \left[\ian,\ian\right] \arrow{r}{} & \ian\arrow{r}{\bar{\uptau}_N} & \Lambda^3H_\bbZ\oplus B_2/\langle a\rangle \arrow{r} & 1
 \end{tikzcd}
   \end{equation}

 The fact that $\tilde{\al}$ must map torsion to torsion implies that $\tilde{\al}(\mathcal{B}_2/\langle a \rangle )\subset\mathcal{B}_2/\langle a \rangle$. Thus,
 \[\bar{\uptau}_N(\al(\iapt))=\tilde{\al}(\bar{\uptau}_p(\iapt))\subset \mathcal{B}_2/\langle a \rangle.\]
 This containment implies \[\al(\iapt)\subset \bar{\uptau}_N^{-1}(\mathcal{B}_2/\langle a \rangle )=\iant.\]
 Therefore $\iapt\subset \iant$.\\

The containment $\iapt\subset\iant$ allows us to deduce the following:
\[[N,N]\subseteq N\cap [\iap,\iap]\subseteq N\cap\iapt\subseteq N\cap \iant= [N,N].\]

Therefore, $N\cap\iapt=\gamma_2(N)$.

\subsection{$\Spq$ representation}\label{Niniap} In this subsection, we will use the representation theory of $\Spq$ to show that $N\cdot\iapt=P\cdot\iapt$.\\

$\mcg$ acts on $\iap$ via conjugation. The kernel of $\uptau$ is exactly the set of elements that act trivially on $\pi_1/\gamma_3(\pi_1)$, i.e. $\ker(\uptau)=\iapt$. The quotient $\iap/\iapt$ is the universal torsion-free abelian quotient of $\iap$. Thus, the conjugation action of $\iap$ on $\iap/\iapt$ is trivial. Therefore, we have a well-defined action of $\mcg/\ia\cong\Sp$ on $\ia/\ia_2$. Similarly, $\Sp$ has a canonical action on $\Lambda^3 H_\bbZ$. The isomorphism \[\uptau:\iap/\iapt\to \Lambda^3 H_\bbZ\] is $\Sp$-equivariant. \\ To prove that $N\cdot \iapt/\iapt=P\cdot \iapt/\iapt$ we will establish the following bijective correspondence:\\ 
\begin{center}
 $\Big\{$ \begin{tabular}[t]{m{3.7cm}}
  $\Spq$-irreps in $\Lambda^3H_\bbQ$ 
 \end{tabular}$\Big\}\longleftrightarrow\Big\{$ \begin{tabular}[t]{m{4.3cm}}
 $\Sp$-invariant $\bbZ$-module direct summands in $\ia/\ia_2$
 \end{tabular}$\Big\}.$ \end{center}
 \vspace{.25cm}
  We will check that there is exactly one $\Spq$-invariant, dimension $2g$ subspace of $\Lambda^3H_\bbQ$. To conclude, we will show that both $N\iapt/\iapt$ and $P\iapt/\iapt$ are rank $2g$ direct summands of $\iap/\iapt$ invariant under the action of $\Sp$.

\begin{lem}[(Bijective correspondence)]\label{spirrep} 
There is a bijective correspondence between $\Spq$-invariant dimension-$m$ $\bbQ$-vector subspaces of $\Lambda^3H_\bbQ$ and $\Sp$-invariant rank $m$ $\bbZ$-module direct summands of $\Lambda^3H_\bbZ$.
\end{lem}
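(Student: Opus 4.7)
The plan is to exhibit the correspondence explicitly by the natural maps
$L \mapsto L \otimes_{\bbZ} \bbQ$ in the forward direction and
$V \mapsto V \cap \Lambda^3 H_\bbZ$ in the backward direction, and to verify that these maps preserve invariance, preserve dimension/rank, and are mutually inverse.

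For the forward direction, suppose $L \subset \Lambda^3 H_\bbZ$ is a rank-$m$ $\Sp$-invariant direct summand. Then $V := L \otimes \bbQ$ has $\bbQ$-dimension $m$, sits naturally inside $\Lambda^3 H_\bbQ$, and is $\Sp$-invariant. The crux is upgrading $\Sp$-invariance to $\Spq$-invariance. For this I would use that the stabilizer of any $\bbQ$-subspace of $\Lambda^3 H_\bbQ$ inside the algebraic group $\Spq$ is itself a $\bbQ$-algebraic subgroup, together with the classical fact (Borel density, or an elementary argument using that $\Sp$ is generated by elementary symplectic transvections which span $\Spq$ in the Zariski sense) that $\Sp$ is Zariski dense in $\Spq$. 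Since the stabilizer contains $\Sp$ and is Zariski closed, it must be all of $\Spq$.

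For the backward direction, let $V \subset \Lambda^3 H_\bbQ$ be an $\Spq$-invariant subspace of dimension $m$ and set $L := V \cap \Lambda^3 H_\bbZ$. Then $L$ is $\Sp$-invariant since both $V$ and $\Lambda^3 H_\bbZ$ are. To see $L$ has rank $m$, note that any $v \in V$ can be scaled by a positive integer to land in $\Lambda^3 H_\bbZ$, so $L \otimes \bbQ = V$. To see $L$ is a direct summand, I would check the equivalent condition that $\Lambda^3 H_\bbZ / L$ is torsion-free: if $x \in \Lambda^3 H_\bbZ$ satisfies $nx \in L \subset V$ for some $n \neq 0$, then $x \in V$ since $V$ is a $\bbQ$-subspace, hence $x \in V \cap \Lambda^3 H_\bbZ = L$.

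Finally, the two maps are inverse: $(L \otimes \bbQ) \cap \Lambda^3 H_\bbZ = L$ because $L$ is a saturated (i.e.\ direct-summand) sublattice, and $(V \cap \Lambda^3 H_\bbZ) \otimes \bbQ = V$ by the clearing-denominators argument above. The main obstacle is the Zariski-density step; everything else is essentially formal manipulation of lattices and $\bbQ$-vector spaces. Since this density statement for $\Sp$ inside $\Spq$ is a standard result that I would cite rather than reprove, the lemma reduces to a routine verification.
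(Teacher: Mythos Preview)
Your proposal is correct and follows the same overall architecture as the paper: send $L$ to $L\otimes\bbQ$ one way, take integral points the other way, and verify the two maps are mutual inverses. The arguments for rank, for the direct-summand property (via torsion-freeness of the quotient), and for the two compositions being identities are equivalent to the paper's, which phrases the last step as a projection onto a complement $V^\perp$ rather than in saturation language.

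The one substantive difference is in how $\Sp$-invariance of $L\otimes\bbQ$ is upgraded to $\Spq$-invariance. The paper writes down an explicit list of generators for $\Spq$ (elementary symplectic matrices depending on a rational parameter $\lambda$) and observes that each such generator is an affine combination of the corresponding integral generator and the identity, so invariance under $\Sp$ and scalars forces invariance under $\Spq$. You instead invoke Zariski density of $\Sp$ in $\Spq$ together with the fact that the stabilizer of a subspace is Zariski closed. Your argument is cleaner and applies uniformly to any rational representation without inspecting generators; the paper's is more hands-on and avoids citing the density theorem, at the cost of working with an explicit generating set. Either is acceptable, and your version is arguably more robust since the paper's affine-combination identity is stated for the standard representation and one must still observe it propagates to $\Lambda^3 H_\bbQ$.
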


\begin{proof}[Proof of Lemma \ref{spirrep}] 
Define the map \[f: \{\Sp\text{-invariant direct summands of }\Lambda^3H_\bbZ\}\to \{\text{subspaces of }\Lambda^3H_\bbQ\}\] via \[f(V)= V\otimes \bbQ \,.\] To establish the bijective correspondence, we need to check that the image of $f$ lies in $\Spq$-invariant subspaces of $\Lambda^3H_\bbQ$. 

Fix a basis of $\bbQ^{2g}$ so that $\Spq<\gl$ is the subgroup that fixes the symplectic form $\left(\begin{array}{c|c}
0 & I_{g\times g}\\
\hline
-I_{g\times g}& 0
\end{array}\right)$. The group $\Spq$ is generated by matrices of the following forms, where $\lambda$ varies in $\bbQ$, and $e_{ij}$ is the $g\times g$ matrix with 1 in the $i,j$ entry and $0$ elsewhere (see e.g. \cite[Sect.2.2]{Spgen}):

\begin{equation}\label{spgen}
\left(\begin{array}{l|r}
I_{g\times g}& \lambda e_{ii}\\
\hline
& I_{g\times g}
\end{array}\right),\,\, \left(\begin{array}{l|r}
I_{g\times g}& \\
\hline
\lambda e_{ii} & I_{g\times g}
\end{array}\right),\,\, \left(\begin{array}{c|c}
I_{g\times g}& \\
\hline
\lambda( e_{ij}+e_{ji}) & I_{g\times g}
\end{array}\right), 
\end{equation}

\begin{equation*} 
\left(\begin{array}{c|c}
I_{g\times g}& \lambda( e_{ij}+e_{ji})\\
\hline
 & I_{g\times g}
\end{array}\right),\,\, \left(\begin{array}{l|r}
I_{g\times g} +\lambda e_{ij}\\
\hline
 & I_{g\times g}-\lambda e_{ji}
\end{array}\right).\end{equation*}

\vspace{.45cm}

Let $V$ be an $\Sp$-invariant direct summand of $\Lambda^3H_\bbZ$, and let $v\in V$. Let $A$ be any of the generators of $\Spq$ given in (\ref{spgen}) and let $A_\bbZ$ be the matrix $A$ with $\lambda=1$. Notice that $A_\bbZ\in\Sp$ and $A=\lambda A_\bbZ-(\lambda-1)I_{2g\times 2g}$. Therefore, for any $q\in \bbQ$: \[Aqv=qAv=q((\lambda)(A_\bbZ v)-\lambda v+ v)\, .\] Since $V$ is an $\Sp$-invariant direct summand, $q((\lambda)(A_\bbZ v)-\lambda v+ v) \in V\otimes \bbQ$.
Therefore $V\otimes \bbQ$ is an $\Spq$-invariant subspace.\\

Let $W$ be an $\Spq$-invariant subspace of $\Lambda^3H_\bbQ$. Let $W_\bbZ$ be the $\bbZ$-module consisting of all integral points of $W$. Define the map \[g:  \{\Spq \text{-invariant subspaces of }\Lambda^3H_\bbQ\}\to \{\Sp\text{-invariant direct summands of }\Lambda^3H_\bbZ\}\] via \[g(W)= W_\bbZ.\] 

 The composition $f\circ g$ is the identity because $W_\bbZ\otimes \bbQ=W$.\\
 
 On the other hand, consider $v\in g\circ f(V)=(V\otimes \bbQ)_\bbZ$. Decompose $\Lambda^3H_\bbZ=V\oplus V^\perp$. If $v\notin V$ then the projection of $v$ onto $V^\perp\not=0$. Let $p_\perp(v)$ be the projection onto $V^\perp$. Because $v\in V\otimes\bbQ$, it follows that $nv\in V$ for some large enough $n\in\bbZ$. However, that implies $p_\perp(nv)=0$, or equivalently $n(p_\perp(v))=0$, a contradiction. \\

Therefore, $g$ is a bijection and the correspondence is established.
\end{proof}

The representation $\Lambda^3H_\bbQ$ decomposes as an $\Spq$-representation  in the following way (see, e.g. \cite[Sect.3]{FarbSp}): \[\Lambda^3H_\bbQ\cong H_\bbQ\oplus\Lambda^3H_\bbQ/H_\bbQ.\]  Note that $\dim_\bbQ (H_\bbQ)=2g$ and $\dim_\bbQ(\Lambda^3H_\bbQ/H_\bbQ)=\binom{2g}{3}-2g$. 
Thus, for genus $g\geq 3$, there is exactly one $\Spq$-invariant, dimension-$2g$ subspace of $\Lambda^3H_\bbQ$.\\

From Section \ref{Niniapt} we have \[N\cap\iapt=[N,N].\]
Thus, \[N\iapt/\iapt\cong N/(N\cap\iapt)\cong N/[N,N]\cong \bbZ^{2g}.\]
 Therefore, $N\iapt/\iapt$ is a $\bbZ$-module of rank $2g$. Likewise, $P\iapt/\iapt$ is a $\bbZ$-module of  rank $2g$.\\

To see that the submodule $P\iapt/\iapt$ is a direct summand of $\iap/\iapt$, it is sufficient to check that the generators of $P$ surject onto a partial basis of $\Lambda^3H_\bbZ$ under the Johnson homomorphism. A \emph{partial basis} is any set of linearly independent vectors that can be completed to a $\bbZ$-basis. \\
Consider a fixed generating set for $P$ and a corresponding basis for $H_\bbZ$, given by $\{a_1,b_1,\dots, a_g,b_g\}$. Then, $\uptau(a_i)=\theta\wedge a_i$ where $\theta=\sum_i a_i\wedge b_i$. For details of this computation, see Johnson's work in \cite{abelianquotient}. The image of the standard generators of $P$ gives a partial basis of $\Lambda^3H_\bbZ$. Therefore the image of $P$ is a direct summand in $\Lambda^3H_\bbZ$. \\
 
 It remains to be seen that $N\cdot \iapt/\iapt$ is a direct summand. Because $[N,N]\subset[\iap,\iap]\subset[\ian,\ian]$, the following diagram given by restrictions of quotient maps commutes:
  \begin{equation}\label{abelianN}
  \begin{tikzcd}
 N/[N,N] \arrow{r}{k}\arrow{rd}[swap]{j}  & \iap/\iapt\arrow{d}{}  \\
  &\ian/\iant
 \end{tikzcd}
 \end{equation}
 
 The image $j(N/[N,N])=N\iant/\iant\cong\bbZ^{2g}$ is a direct summand in $\ian/\iant$. Further, $k(N/[N,N])=N\iapt/\iapt\cong\bbZ^{2g}$.  
 
\begin{lem}\label{direct} 
Suppose that the diagram below commutes \begin{center}
  \begin{tikzcd}
  {}
  \bbZ^{2g} \arrow{dr}[swap]{L_2}\arrow{r}{L_1}& \bbZ^{2g}\oplus\bbZ^{n-2g}\arrow{d}{L_3} \\
  &\bbZ^{2g}\oplus\bbZ^{n-2g}
  \end{tikzcd}
  \end{center}
   and the maps $L_i$ are linear. 
 If $L_2(\bbZ^{2g})\cong\bbZ^{2g}$ is a direct summand in $\bbZ^{2g}\oplus\bbZ^{n-2g}$, then so is $L_1(\bbZ^{2g})$.
 \end{lem}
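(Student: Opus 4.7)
The plan is to exhibit a $\bbZ$-linear left inverse $\sigma: \bbZ^{2g}\oplus\bbZ^{n-2g}\to \bbZ^{2g}$ with $\sigma\circ L_1 = \mathrm{id}_{\bbZ^{2g}}$, since a subgroup of a finitely generated free abelian group is a direct summand if and only if the inclusion admits such a retraction.

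The key preliminary observation is that $L_2:\bbZ^{2g}\to L_2(\bbZ^{2g})\cong \bbZ^{2g}$ is a surjection between free abelian groups of the same finite rank, and any such surjection is an isomorphism: its kernel, being a subgroup of a free abelian group, is free, and the quotient being free of the same rank forces the kernel to be trivial. In particular, $L_2$ is injective, and from the commutativity $L_2 = L_3\circ L_1$ it follows that $L_1$ is injective as well. I would then use the hypothesis that $L_2(\bbZ^{2g})$ is a direct summand of $\bbZ^{2g}\oplus\bbZ^{n-2g}$ to fix a projection $\pi:\bbZ^{2g}\oplus\bbZ^{n-2g}\to L_2(\bbZ^{2g})$ satisfying $\pi|_{L_2(\bbZ^{2g})} = \mathrm{id}$.

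Finally I would set $\sigma := L_2^{-1}\circ \pi\circ L_3$. Using the commutativity $L_3\circ L_1 = L_2$ and $\pi\circ L_2 = L_2$, one computes
$$\sigma\circ L_1 \;=\; L_2^{-1}\circ \pi\circ L_3\circ L_1 \;=\; L_2^{-1}\circ \pi\circ L_2 \;=\; L_2^{-1}\circ L_2 \;=\; \mathrm{id}_{\bbZ^{2g}},$$
so $\sigma$ is the desired retraction. This gives a splitting $\bbZ^{2g}\oplus\bbZ^{n-2g} \cong L_1(\bbZ^{2g})\oplus \ker(\sigma)$, exhibiting $L_1(\bbZ^{2g})$ as a direct summand.

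There is no substantial obstacle: the hypothesis that $L_2(\bbZ^{2g})$ is a rank-$2g$ direct summand bundles together the two ingredients required for a splitting of $L_1$ — the invertibility of $L_2$ onto its image, and a projection $\pi$ — and the commutative triangle lets one transport these through $L_3$ back to the middle copy of $\bbZ^{2g}\oplus\bbZ^{n-2g}$.
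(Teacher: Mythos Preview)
Your proof is correct and is essentially the same as the paper's: both construct a retraction of $L_1$ by composing a retraction of $L_2$ with $L_3$. The paper phrases this as taking $R:\bbZ^n\to\bbZ^{2g}$ with $R\circ L_2=\mathrm{Id}$ and then observing that $R\circ L_3$ retracts $L_1$; your $\sigma=L_2^{-1}\circ\pi\circ L_3$ is precisely this map, since $L_2^{-1}\circ\pi$ is such an $R$. The only cosmetic difference is that the paper finishes by checking that $L_1\circ R\circ L_3$ is an idempotent with image $L_1(\bbZ^{2g})$, whereas you conclude directly from the retraction, and you make the injectivity of $L_2$ explicit rather than folding it into the existence of $R$.
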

 
 \begin{proof}[Proof of Lemma \ref{direct}] Because $L_2(\bbZ^{2g})$ is a direct summand in $\bbZ^n$, there exists a retract $R:\bbZ^{N}\to \bbZ^{2g}$ of $L_2$ with $R\circ L_2=Id_{\bbZ^{2g}}$. Further, since $L_2=L_3\circ L_1$, the homomorphism $R\circ L_3: \bbZ^{N}\to \bbZ^{2g}$ is a retract of $L_1$. That is $R\circ L_3\circ L_1=Id_{\bbZ^{2g}}$. Consider $L_1\circ R\circ L_3:\bbZ^{N}\to \bbZ^{2g}$. Note that: \[(L_1\circ R\circ L_3)^2=L_1\circ( R\circ L_3\circ L_1)\circ R\circ L_3=L_1\circ( Id_{\bbZ^{2g}})\circ R\circ L_3=L_1\circ R\circ L_3.\] It follows that $L_1\circ R\circ L_3$ is a projection with image $L_1(\bbZ^{2g})$. Thus, $L_1(\bbZ^{2g})$ is a direct summand.
 \end{proof}

Applying Lemma \ref{direct} to commutative diagram (\ref{abelianN}), it follows that $k(N/[N,N])=N\iapt/\iapt\cong\bbZ^{2g}$ is a direct summand in $\iap/\iapt$.\\ 
 
 Because $N, P,$ and $\iapt$ are normal in $\mcg$, both of the above $\bbZ$-module direct summands are invariant under the action of $\Sp$.
   There is exactly one rank $2g$ direct summand $\bbZ$-submodule of $\iap/\iapt$. Thus, $N\iapt/\iapt= P\iapt/\iapt$. Equivalently, $N\iapt=P\iapt$.

\section*{3. Commutator containment: $[N,N]\subset[P,P]$.}
\addtocounter{section}{1}
\setcounter{subsection}{0}
From Section 2, we have the containment $N\subset P\iapt$. Furthermore, since $[N,N]\subset \iapt$, it is also true that $[N,N]\subset P\iapt$. In this section, we will use an inductive argument to confirm that $[N,N]\subset P\iapk$ for all $k$. Grossman's Property A Lemma (see Lemma \ref{gross}) implies that for any surface group $\pp$, if $q\in \text{Aut}(\pp)$ preserves conjugacy classes in $\pp$, then $q\in \pp$.  Using Grossman together with the conjugacy $p$-separability of surface groups, we will show that $\cap_k P\ia_k=P$. This will prove $[N,N]\subset P\cap\iapt=[P,P]$.\\

\noindent We have already established the following facts:
\begin{enumerate}[label=\roman*.]
\item\label{centra} The Johnson filtration is a central series \cite{basslub}.
\item\label{NinTorelli} $N\subset \iap$ (Sect. 1).
\item\label{Niat} $N\subset P\iapt$ (from Sect. 2).
\item\label{intersectionofN} $N\cap\ \iank =\gamma_k(N)$ (Prop \ref{mainlemma}).
\item\label{iatiniatn} $\iapt\subset\iant$ (from Sect. 2).

\item\label{normal} $[\iapt,N]\subset N$ (because $N$ is normal in $\mcg$).
\item $[\iant,N]\subset \ia_3(N)$ (because $N\subset \ia$ and the Johnson filtration is a central series).\\

\noindent We will establish an eighth fact:
\item\label{pulloutp} $[PG,N]\subset P[G,N]$ for any $G\triangleleft\mcg$ (below).\\

\end{enumerate}
To prove (viii), let $G\triangleleft\mcg$. Let $g\in G$, $p\in P,$ and $  n\in N$ be given. Then \[[pg,n]=pgng^{-1}p^{-1}n^{-1}=pgnp^{-1}n^{-1}pg^{-1}p^{-1}(pgp^{-1}n(pgp^{-1})^{-1}n^{-1}).\] However, $P$ normal in $\mcg$ implies that: \[pgnp^{-1}n^{-1}pg^{-1}p^{-1}\in P.\] Furthermore, because $G$ is normal in $\mcg$ it follows that: \[((pgp^{-1})n(pgp^{-1})^{-1}n^{-1})\in [G,N].\]  Therefore $[PG,N]\subset P[G,N]$ for any $G\triangleleft\mcg$. In particular, $[P\iapt,N]\subset P[\iapt,N]$.\\

With reference to the above list of facts,
\[[N,N]\stackrel{(\ref{Niat})}{\subset} [P\iapt,N]\stackrel{(\ref{pulloutp})}{\subset} P[\iapt,N]\stackrel{(\ref{iatiniatn})}{\subset} P[\iant,N]\stackrel{(\ref{centra}),\,(\ref{NinTorelli})}{\subset} P(\ia_3(N)\cap N)\stackrel{(\ref{intersectionofN})}{\subset} P\gamma_3(N).\] 
Therefore, $[N,N]=\gamma_2(N)\subset P\gamma_3(N)$.\\

We will induct on $m$ to check that $\gamma_2(N)\subset P\gamma_m(N)$ for all $m>0$. Let $M\in\bbN$ with $M\geq 3$. Suppose for all $m\leq M$ we have $\gamma_2(N)\subset P\gamma_m(N)$. 
It follows that: \[[N,N]=\gamma_2(N)\subset P\gamma_3(N)=P[\gamma_2(N),N]\subset P[P\gamma_{M}(N),N]\subset P^2[\gamma_M(N),N]\subset P\gamma_{M+1}(N).\] Therefore, $[N,N]\subset P\gamma_m(N)$ for all $m\geq 1$.

We will use a second inductive argument to show that $\gamma_k(N)\subset \iapk$ for all $k\geq 2$. For the base case, note that: \[ [N,N]\subset[\iap,\iap]\subset \iapt.\] Assume as inductive hypothesis that $\gamma_k(N)\subset\iap_k$ for all $k< K$. Then \[\gamma_K(N)=[\gamma_{K-1}(N),N]\subseteq[\ia_{K-1}(\Sigma),\iap]\subseteq \ia_K(\Sigma).\]
 The above containment implies that \[[N,N]=\gamma_2(N)\subseteq \cap_k P\gamma_k(N)\subseteq \cap_k P\iap_k.\]

In order to confirm that $[N,N]\subset P$, it remains to be shown that  $\cap_k P\iap_k=P$. We will use the following Lemma due to Grossman:
  \begin{lem}[(Grossman's Property A \cite{grossman})]\label{gross}
    Let $P$ be a surface group of genus $g\geq 1$. Let $q\in \text{Aut}(P)$. If $q$ preserves conjugacy classes in $P$, then $q\in P$. 
   \end{lem}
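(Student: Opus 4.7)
The plan is to follow Grossman's original strategy, which combines residual finiteness properties of surface groups with an analysis of their finite characteristic quotients. The essential input is that surface groups are \emph{conjugacy separable} (Stebe, and more strongly conjugacy $p$-separable for every prime by Martino–Paris): any two non-conjugate elements of $P$ are separated in some finite quotient. Together with a finite-quotient version of the conjugacy-preservation hypothesis, this will let us force $q$ to be inner, and hence to lie in $P$ via the identification $P\cong\text{Inn}(P)$ valid for $g\geq 2$ (the toroidal case $g=1$ is trivial since $P$ is abelian).

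First, assume $q\in\text{Aut}(P)$ preserves conjugacy classes, so for each $x\in P$ there exists $h_x\in P$ with $q(x)=h_xxh_x^{-1}$. The whole point is to replace the family $\{h_x\}$ by a single element $h$ working for all $x$. To organize this, form the extension $\widetilde{P}=P\rtimes\langle q\rangle\le \text{Aut}(P)$. Inside $\widetilde{P}$, the hypothesis on $q$ is equivalent to saying that $h_x^{-1}q$ centralizes $x$ for every $x\in P$; a uniform choice $h_x=h$ would therefore exhibit $h^{-1}q$ as a central element of $\widetilde{P}$, and since $P$ is centerless this gives $q=h\in P$.

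Next, I would push the problem into a cofinal tower of finite characteristic quotients $P\twoheadrightarrow P/K_n$ (for concreteness, the quotients by the terms of a lower central $p$-series, since these are characteristic and their intersection is trivial by residual $p$-finiteness of surface groups). The induced $\bar q_n\in\text{Aut}(P/K_n)$ still preserves conjugacy classes. If one can show that for this specific family of finite quotients every conjugacy-class-preserving automorphism is inner — this is Grossman's \emph{Property A} for the tower — then each $\bar q_n$ is conjugation by some coset $\bar h_n\in (P/K_n)/Z_n$, and compatibility across the inverse system yields a coherent element $\hat h\in\widehat P$ with $\hat h^{-1}q\hat h^{-1}$ central in the profinite completion. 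Conjugacy separability, applied to the action of $q$ on $P$, then upgrades $\hat h$ to an honest element $h\in P$.

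The main obstacle is Property A itself: finite groups can admit outer conjugacy-class-preserving automorphisms (Burnside), so this step is not formal. Grossman handles it by a direct Reidemeister–Schreier analysis of the relevant finite quotients, using the one defining relator $\Pi_i[a_i,b_i]=1$ in an essential way. A cleaner modern route, adapted to the machinery of this paper, is to induct along the lower central quotients $P/\gamma_k(P)$ using the Lie algebra $\Lambda$ of Section 2: the abelianization case is immediate (on $H_1(P;\bbZ)$ a conjugacy-preserving automorphism is the identity since inner automorphisms act trivially), and the inductive step uses that $\Lambda$ is generated in degree one and has trivial center (Proposition \ref{mainlemma}), which forces the conjugator at stage $k+1$ to be uniquely determined modulo $\gamma_{k+1}(P)$ by its image at stage $k$. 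Assembling these conjugators and taking a limit in $\widehat P$, then invoking conjugacy $p$-separability, completes the argument.
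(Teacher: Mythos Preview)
The paper does not prove this lemma; it is quoted from Grossman and used as a black box. So there is no ``paper's proof'' to compare against, only the question of whether your sketch actually establishes Property~A.

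There is a genuine gap in your inductive step. You assert that the centerlessness of $\Lambda$ ``forces the conjugator at stage $k+1$ to be uniquely determined modulo $\gamma_{k+1}(P)$ by its image at stage $k$.'' But uniqueness is not the issue---existence is. Concretely: after replacing $q$ by $c_{h_k}^{-1}\circ q$ you have an automorphism $\phi$ acting trivially on $P/\gamma_k(P)$ and still class-preserving, so $\phi(x)x^{-1}=[g_x,x]$ with $g_x$ depending on $x$. To show $\phi$ is inner on $P/\gamma_{k+1}(P)$ you must produce a \emph{single} $h'$ with $[g_x,x]\equiv[h',x]\pmod{\gamma_{k+1}}$ for every $x$. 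Nothing in the hypothesis forces the various $[g_x,x]$ to be of this uniform form; the triviality of $Z(\Lambda)$ only says that \emph{if} such an $h'$ exists it is unique modulo $\gamma_k$. This is exactly the heart of Property~A, and you have not supplied it.

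Your final ``upgrade'' step is also not justified. Even granting a coherent element $\hat h$ in the pronilpotent (or pro-$p$) completion with $q=c_{\hat h}$ there, conjugacy ($p$-)separability of $P$ compares conjugacy of two elements of $P$; it does not by itself pull a conjugating element back from $\widehat P$ into $P$. Indeed, the paper uses Grossman's lemma together with conjugacy $p$-separability to prove $\bigcap_k P\,\iapk=P$, so invoking that circle of ideas here risks circularity.

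A clean route, entirely in the spirit of this paper, is geometric: since $q$ preserves conjugacy classes it acts trivially on $H_1$, hence corresponds under Dehn--Nielsen--Baer to an orientation-preserving mapping class; its image in $\text{Out}(\pi_1(\Sigma_g))\cong\text{Mod}^{\pm}(\Sigma_g)$ fixes every free homotopy class of loops, so by the Alexander method (exactly as in the proof of Proposition~\ref{mainprop}) it is trivial, i.e.\ $q\in\text{Inn}(\pi_1(\Sigma_g))=P$.
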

   
   To apply Lemma \ref{gross}, choose $q\in \cap P\ia_k$ and $x\in P$. Since $q\in P\ia_k$ for all $k\geq 1$, we can find $u_k\in P,$ and $i_k\in \iapk$ such that $q=u_ki_k$. However, because $i_k\in\ia_k$ it follows that $i_kxi_k^{-1}x^{-1}\in\gamma_{k+1}(P)$. This can be rewritten in terms of left cosets as \[i_kxi_k^{-1}\gamma_{k+1}(P)=x\gamma_{k+1}(P).\] 
    Conjugating by $u_k$ gives \[u_ki_kxi_k^{-1}u_k^{-1}\gamma_{k+1}(P)=u_kxu_k^{-1}\gamma_{k+1}(P).\] That is, $q xq^{-1}$ is conjugate to $x$ in $P/\gamma_{k+1}(P)$ for all $k\geq 1$.\\
    
   Finite $p$-groups are nilpotent. Furthermore, any homomorphism $\phi:P\to H$ where $H$ is $i$-step nilpotent factors through $P/\gamma_{i+1}(P)$. Thus, any homomorphism $\phi:P\to H$ where $H$ is a $p$-group factors through $P/\gamma_{k}(P)$ for some $k$. \\  
   Suppose $\phi:P\to H$ gives a homomorphism to some $p$-group $H$. Because $q x q^{-1}$ is conjugate to $x$ in $P/\gamma_{k}(P)$ for all $k\geq 1$, it must be that $\phi (q x q^{-1})$ is conjugate to $\phi(x)$ in $H$. Because $P$ is conjugacy $p$-separable (see \cite{paris}), $q x q^{-1}$ is conjugate to $x$ in $P$.   Applying Lemma \ref{gross}, it follows that $q\in P$. Therefore, $\displaystyle \cap_k P\iapk=P$.\\
   
   We have established for all $k\geq 1$ \[\gamma_2(N)\subset P\gamma_k(N)\subset P\iapk.\] That is, $\gamma_2(N)\subset \cap_kP\iapk=P$. From Section 2, we have the containment $\gamma_2(N)\subset\iapt$. Thus, $\gamma_2(N)\subset P\cap\iapt=\gamma_2(P)$. This concludes the first main goal in the proof of Theorem \ref{maintheorem}: \[\gamma_2(N)\subseteq\gamma_2(P)\subseteq\iapt\subseteq\iap\subseteq\ian.\]
   
   \section*{4. Characterizing $P$}
   \addtocounter{section}{1}
   \setcounter{subsection}{0}
   In this section we will characterize $P$ in terms of $\iap$ and $\gamma_2(P)$. In the proof, we will show that any $\phi\in \iap$ satisfying certain conditions must fix a filling set of curves up to conjugation. Then, we will apply the Alexander method to show that $\phi$ must be isotopic to the identity in $\mcgg$.
   
   \begin{prop}[(Characterization of $P$)]\label{mainprop} For $g\geq 3$,
   \[P(\Sigma_g)=\{x\in\iap\,|\, [x,\iap]\subset\gamma_2(P(\Sigma_g))\}.\]
   \end{prop}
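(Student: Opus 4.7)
My plan is to prove Proposition~\ref{mainprop} by establishing the two containments separately. The containment $P(\surface)\subseteq\{x\in\iap\mid [x,\iap]\subseteq\gamma_2(P(\surface))\}$ is the easy direction: for $p\in P$ and $y\in\iap$, normality of $P\triangleleft\mcg$ gives $[p,y]\in P$, while $P\subseteq\iap$ (Section~1) together with $[\iap,\iap]\subseteq\iapt$ (Section~2) gives $[p,y]\in\iapt$. Proposition~\ref{mainlemma} then yields $[p,y]\in P\cap\iapt=\gamma_2(P)$.

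For the reverse containment, suppose $x\in\iap$ satisfies $[x,\iap]\subseteq\gamma_2(P)$. I aim to prove that $F(x)=1$ in $\mcgg$, where $F:\mcg\to\mcgg$ is the forgetful map; since $P=\ker F$, this will conclude the proof. The assumption places $[x,y]$ inside $\gamma_2(P)\subseteq P=\ker F$ for every $y\in\iap$, so applying $F$ to the commutator yields $[F(x),F(y)]=1$ in $\mcgg$. Because the symplectic representation of $\mcg$ factors through $F$, one has $F(\iap)=\ia(\surface)$, the Torelli group of $\mcgg$, and therefore $F(x)$ centralizes all of $\ia(\surface)$.

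I would next translate this centralizing condition into information about how $F(x)$ acts on simple closed curves. For any separating simple closed curve $\alpha\subset\surface$, the Dehn twist $T_\alpha$ lies in $\ia(\surface)$, so $F(x)T_\alpha F(x)^{-1}=T_\alpha$ gives $T_{F(x)(\alpha)}=T_\alpha$ and hence $F(x)(\alpha)=\alpha$ as an isotopy class. Applying the same argument to bounding-pair maps $T_\alpha T_\beta^{-1}\in\ia(\surface)$ shows that $F(x)$ preserves every unordered bounding pair $\{\alpha,\beta\}$ on $\surface$. Under Dehn--Nielsen--Baer and the identification $P\cong\pp$, this is precisely the statement that $x$ fixes the conjugacy classes in $\pp$ represented by a large supply of simple closed curves.

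Finally I would select a finite filling family of simple closed curves drawn from the curves fixed (up to isotopy) by $F(x)$; for $g\geq 3$, separating curves together with bounding-pair curves are plentiful enough that such a family exists on $\surface$. The Alexander method then forces $F(x)$ to be isotopic to a finite-order element of $\mcgg$, and since $\ia(\surface)$ is torsion-free for $g\geq 2$ we conclude $F(x)=1$ and hence $x\in\ker F=P$. The main obstacle I expect is this last step: assembling a concrete filling family from separating and bounding-pair curves whose fixation by $F(x)$ is verifiable from the commutator data, and ruling out the possibility that $F(x)$ swaps the two members of a bounding pair rather than fixing each one individually. The swap possibility will need to be excluded using orientation-preservation of $F(x)\in\ia(\surface)$ together with the compatibility conditions imposed by the already-established action on separating curves.
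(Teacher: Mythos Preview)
Your approach is essentially the paper's: push the commutator hypothesis through the forgetful map $F$ so that $F(x)$ centralizes bounding-pair maps in $\mcgg$, deduce that $F(x)$ fixes the isotopy classes of a filling family of simple closed curves, and apply the Alexander method. Two remarks on the obstacles you flag. First, the paper dispenses with separating curves entirely: every non-separating simple closed curve $c$ sits in some bounding pair $\{c,c'\}$, and non-separating curves already fill $\surface$, so no mixed family is needed. Second, the swap issue dissolves algebraically rather than via the orientation/compatibility analysis you anticipate: if $F(x)$ exchanged $a$ and $b$ then $T_bT_a^{-1}=T_aT_b^{-1}$ in $\mcgg$, and since $a,b$ are disjoint this rearranges to $T_a^2=T_b^2$, impossible for non-isotopic curves (alternatively, choose two bounding pairs $\{a,b\}$ and $\{a,c\}$ with $b\neq c$ sharing $a$; preservation of both unordered pairs forces $F(x)(a)=a$).
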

   
   The proof of Proposition \ref{mainprop} was greatly simplified by Chen Lei. 
     \begin{proof}  Because $P\triangleleft\mcg$ and $P\subset\iap$ it follows that for any $p\in P$ \[[p,\iap]\subset (\iapt\cap P)=\gamma_2(P).\] Therefore, \[P\subseteq \{x\in\iap\,|\, [x,\iap]\subset\gamma_2(P)\}.\]
   
  For the opposite containment, let $\phi\in\{x\in\iap\,|\, [x,\iap]\subset\gamma_2(P)\}$.
  
Our goal is to apply the Alexander method by demonstrating that  $\phi x_i\phi^{-1}$ is isotopic to $x_i$ for a filling set $\{x_i\}$ of simple closed curves. This would force $\phi$ to be isotopic to the identity in $\mcgg$. That is, $\phi\in P$.\\

   Take any bounding pair map, $T_aT_b^{-1}$ where $a$ and $b$ are disjoint, homologous, non-isotopic simple closed curves. Because $a$ and $b$ are homologous, it follows that $T_aT_b^{-1}$ acts trivially on $H_1(\Sigma_{g,1})$. That is, $T_aT_b^{-1}\in \iap$.  By assumption, $\phi T_aT_b^{-1}\phi^{-1}(T_aT_b^{-1})^{-1}\in P$. Mapping into $\mcgg$ via the forgetful map, we obtain, $F(\phi T_aT_b^{-1}\phi^{-1}(T_aT_b^{-1})^{-1})= 1$. That is \[\phi T_aT_b^{-1}\phi^{-1}(T_aT_b^{-1})^{-1}=1 \text{ in }\mcgg.\] Therefore
  \begin{eqnarray*}
  \phi T_aT_b^{-1}\phi^{-1}(T_aT_b^{-1})^{-1}&=&1\\
   \phi T_{(a)}\phi^{-1} \phi T_{(b)}^{-1}\phi^{-1}(T_aT_b^{-1})^{-1}&=&1\\
  T_{\phi(a)}T_{\phi(b)}^{-1}(T_aT_b^{-1})^{-1}&=&1\\
   T_{\phi(a)}T_{\phi(b)}^{-1}&=& T_aT_b^{-1}.\\
  \end{eqnarray*}
  Bounding pair maps commute if and only if they have the same canonical reduction system. Thus, $ T_{\phi(a)}T_{\phi(b)}^{-1}$ and $ T_aT_b^{-1} $ have the same canonical reduction system, namely $\{a,b\}$. As such, the curves $\phi({a})$ and $\phi({b})$ are isotopic to $a$ and $b$, respectively in $\mcgg$.
  
  For any non-separating simple closed curve $c$ there is a bounding pair map $T_c T_{c'}$ where $c$ and $c$ are homologous, disjoint, and non-isotopic. It follows that $\phi{c}$ is isotopic in $\mcgg$ to $c$ for any non-separating simple closed curve $c$.\\ 
        
     In particular, for a filling set of simple closed curves, $\{x_1,\dots,x_k\}$, we have $\phi x_i\phi^{-1}$ is isotopic to $x_i$ for each $i$. By the Alexander method, the map $\phi$ must be trivial in $\text{Out}(\pi_1(\Sigma_g))$. That is, $\phi\in P$. Proposition \ref{mainprop} follows.  \end{proof} 
    
    \section*{5. Conclusion: $N=P$.}
    \addtocounter{section}{1}
    \setcounter{subsection}{0}
     To conclude the proof of Theorem \ref{maintheorem}, write both $P$ and $N$ in the form given by Proposition \ref{mainprop}: \[P=\{x\in \iap\,|\,[x,\iap]\subseteq \gamma_2(P)\}.\] \[N=\{x\in \ian\,|\,[x,\ian]\subseteq \gamma_2(N)\}.\] From section 3, $[N,N]\subset [P,P]$ implies that: \[N\subset\{x\in \ian\,|\,[x,\ian]\subseteq \gamma_2(P)\}.\]
          From Section 1, $\iap\subset\ian$ implies that: \[N\subset\{x\in \ian\,|\,[x,\iap]\subseteq \gamma_2(P)\}.\]
          From Section 1, $N\subset \iap$ implies that: \[N\subset\{x\in \iap\,|\,[x,\iap]\subseteq \gamma_2(P)\}.\]
          Thus \[N\subseteq P.\]
          
          Since $N$ is a subgroup of $P$ and is not free, the index of $N$ in $P$ is finite (see e.g. \cite[Th.1]{jaco}).
          
          We can determine the index of $N$ via the following formula (see e.g. \cite[Sect.2.2 Ex.22]{hatcher}):  \[[N:P]\cdot \chi (\Sigma)=\chi(\Gamma)\] where $\chi$ is Euler characteristic. 
          Therefore $[N:P]=1$ and $N=P$. We have established Theorem \ref{maintheorem}.\\
    
    The following example demonstrates that $N$ need not equal $P$ if we remove the condition of normality. 

    \begin{exm}\label{example}
    Let $\varphi\in\mcgg$. Construct the mapping torus $M_\varphi\cong (I\times \Sigma_g)/(1,x)\sim(0,\varphi(x))$. Note that $\pi_1(M_\varphi)\cong \pp\rtimes\bbZ$. Consider the exact sequence \[1\longrightarrow \pp\longrightarrow\mcg\stackrel{F}{\longrightarrow} \mcgg\longrightarrow 1.\] The preimage $F^{-1}(\varphi)\cong\pp\rtimes\bbZ<\mcg$. This induces an injection \[g:\pi_1(M_\varphi)\hookrightarrow\mcg.\] $M_\varphi$ fibers over $S^1$ with fiber $\Sigma_g$. 
     As long as $H_2(M_\varphi;\bbZ)\geq 2$, the theory of the Thurston norm \cite{thurston} implies that $M_\varphi$ fibers over $S^1$ with fiber $\Sigma_h$ for infinitely many $h$. (These $h$ correspond to integer points in the cone over a fibered face of the unit ball in the Thurston norm.) Fiberings of the form
     \[
    \begin{tikzcd}
    \Sigma_h\arrow{r}{}& M_\varphi\arrow{d}\\
    & S^1
    \end{tikzcd}
    \]
    give injections $i_h:\pi_1(\Sigma_h)\to \pi_1(M_\varphi)$. The image of the composition \[g\circ i_h:\Sigma_h\hookrightarrow\mcg\] is a surface subgroup of $\mcg$. This subgroup is not necessarily normal in $\mcg$. Using the fibered faces of the unit ball in the Thurston norm, we can find multiple (non-normal) copies of $\pp$ in $\mcg$.\\

    \end{exm}

\section{An new proof that Out$(\extmcg)$ is trivial.}\label{burnsec}

\begin{corn}[\ref{maincorollary} (Ivanov-McCarthy's Theorem).]
Let $g\geq 3$. Then $\text{Out}(\extmcg)$ is trivial.
\end{corn}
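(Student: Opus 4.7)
The plan is to combine Theorem \ref{maintheorem} with the Burnside criterion stated in the introduction, using the Dehn--Nielsen--Baer identification $\Phi:\extmcg\stackrel{\cong}{\to}\text{Aut}(\pp)$ under which $P(\surface)$ corresponds to $\text{Inn}(\pp)\cong\pp$ (recall $\pp$ is centerless for $g\geq 2$, so Burnside's hypothesis on $G$ is satisfied with $G=\pp$). Burnside's theorem reduces triviality of $\text{Out}(\extmcg)$ to the single claim that every $\phi\in\text{Aut}(\extmcg)$ fixes $P(\surface)$ setwise.

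First I would verify that $\mcg$ is characteristic in $\extmcg$. Since any $\phi\in\text{Aut}(\extmcg)$ permutes the index-$2$ subgroups of $\extmcg$, it suffices to show $\mcg$ is the unique subgroup of index $2$. For $g\geq 3$ the abelianization $H_1(\mcg;\bbZ)$ is trivial, so the Lyndon--Hochschild--Serre sequence of $1\to\mcg\to\extmcg\to\bbZ/2\to 1$ forces $H_1(\extmcg;\bbZ)\cong\bbZ/2$. Hence the orientation homomorphism is the only surjection to $\bbZ/2$ and $\mcg$ is its unique kernel, so $\phi(\mcg)=\mcg$.

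Next I would show $\phi(P(\surface))=P(\surface)$. Because $P(\surface)$ is normal in $\extmcg$ (pushing along a loop is preserved under all, including orientation-reversing, homeomorphisms), $\phi(P(\surface))$ is also normal in $\extmcg$, hence normal in $\mcg$ by Step~1. As $\phi$ is an isomorphism, $\phi(P(\surface))\cong P(\surface)\cong \pp$ is a genus $g$ surface subgroup. Finally $\phi(P(\surface))\subset\phi(\mcg)=\mcg$. Thus $\phi(P(\surface))$ is a normal, genus $g$ surface subgroup of $\mcg$, and Theorem \ref{maintheorem} forces $\phi(P(\surface))=P(\surface)$.

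Now apply Burnside with $G=\pp$: $G$ is centerless, and through $\Phi$ the hypothesis ``every $\phi\in\text{Aut}(\text{Aut}(G))$ satisfies $\phi(G)=G$'' is exactly the stability of $P(\surface)\cong \text{Inn}(\pp)$ just established. Burnside therefore yields $\text{Aut}(\extmcg)=\text{Inn}(\extmcg)$, i.e.\ $\text{Out}(\extmcg)=1$. The only genuinely nontrivial step is the verification that $\mcg$ is characteristic in $\extmcg$; once that is in hand, the rest is a direct translation of Theorem \ref{maintheorem} through Dehn--Nielsen--Baer and Burnside.
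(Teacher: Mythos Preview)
Your proof is correct and follows essentially the same route as the paper: show $\mcg$ is characteristic in $\extmcg$, deduce via Theorem~\ref{maintheorem} that $P(\surface)$ is characteristic in $\extmcg$, then apply Burnside through Dehn--Nielsen--Baer. The only cosmetic difference is in the characteristicness step: the paper argues that $\mcg=[\extmcg,\extmcg]$ (same index, one contained in the other), whereas you argue $\mcg$ is the unique index-$2$ subgroup via $H_1(\extmcg;\bbZ)\cong\bbZ/2$; these are equivalent observations.
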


Theorem \ref{maintheorem} together with the following classical theorem of Burnside implies Corollary \ref{maincorollary}. A group $G$ is \emph{complete} if it is centerless and every automorphism is inner, i.e. $\text{Aut}(G)\cong \text{Inn}(G)\cong G$. A subgroup $H< G$ is \emph{characteristic} if $H$ is invariant under all automorphisms of $G$.
\begin{thm}[(Burnside \cite{burnside})]\label{burn} A centerless group $G$ is characteristic in its automorphism group if and only if $\text{Aut}(G)$ is complete.
\end{thm}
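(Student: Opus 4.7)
The plan is to identify $G$ with $\mathrm{Inn}(G) \triangleleft \mathrm{Aut}(G)$ via $g \mapsto c_g$ (conjugation by $g$); this is an isomorphism because $Z(G)=1$. Write $A := \mathrm{Aut}(G)$. The key computational fact I will use repeatedly is the identity
\[
\alpha \, c_g \, \alpha^{-1} \;=\; c_{\alpha(g)} \qquad \text{for all } \alpha \in A,\ g \in G,
\]
which expresses how $A$ acts on $\mathrm{Inn}(G)$ by conjugation.

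For the forward direction, assume $G$ is characteristic in $A$ and prove $A$ is complete. First I would check $A$ is centerless: if $\alpha \in Z(A)$, then $c_g = \alpha c_g \alpha^{-1} = c_{\alpha(g)}$ for every $g$, so $\alpha(g)g^{-1} \in Z(G) = 1$, hence $\alpha = \mathrm{id}$. Next, given $\Phi \in \mathrm{Aut}(A)$, the hypothesis gives $\Phi(\mathrm{Inn}(G)) = \mathrm{Inn}(G)$, so $\Phi$ restricts to an automorphism $\beta \in A$ of $G$ satisfying $\Phi(c_g) = c_{\beta(g)}$. Since conjugation by $\beta$ in $A$ satisfies $c_\beta(c_g) = c_{\beta(g)}$ as well, the automorphism $\Psi := c_\beta^{-1} \circ \Phi$ fixes $\mathrm{Inn}(G)$ pointwise. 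Now for any $\alpha \in A$ and $g \in G$,
\[
c_{\Psi(\alpha)(g)} \;=\; \Psi(\alpha)\, c_g\, \Psi(\alpha)^{-1} \;=\; \Psi\bigl(\alpha c_g \alpha^{-1}\bigr) \;=\; \Psi(c_{\alpha(g)}) \;=\; c_{\alpha(g)},
\]
and centerlessness of $G$ forces $\Psi(\alpha)(g) = \alpha(g)$ for all $g$, hence $\Psi = \mathrm{id}$ and $\Phi = c_\beta$ is inner.

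The reverse direction is immediate: if $A$ is complete, then every $\Phi \in \mathrm{Aut}(A)$ equals $c_\alpha$ for some $\alpha \in A$, and since $\mathrm{Inn}(G)$ is a normal subgroup of $A$ we get $\Phi(\mathrm{Inn}(G)) = \alpha \,\mathrm{Inn}(G)\, \alpha^{-1} = \mathrm{Inn}(G)$. Thus $G$ is characteristic in $A$.

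There is no real obstacle here; the only subtle point is to keep the identifications straight, namely that $\mathrm{Inn}(G) \cong G$ depends on $G$ being centerless, and that the same centerlessness hypothesis is what lets us pass from the equality $c_{\Psi(\alpha)(g)} = c_{\alpha(g)}$ to $\Psi(\alpha) = \alpha$. Once this is unpacked, the proof reduces to the single calculation with the identity $\alpha c_g \alpha^{-1} = c_{\alpha(g)}$ displayed above.
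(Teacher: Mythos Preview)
Your proof is correct and follows essentially the same approach as the paper's: both hinge on the identity $\alpha\, c_g\, \alpha^{-1} = c_{\alpha(g)}$ (the paper writes this as $\psi \circ i(g) \circ \psi^{-1} = i(\psi(g))$), and both show that an arbitrary $\Phi \in \mathrm{Aut}(A)$ agrees with conjugation by its restriction $\beta$ to $\mathrm{Inn}(G)$. Your presentation via $\Psi = c_\beta^{-1}\circ \Phi$ is a cosmetic repackaging of the paper's direct computation $\phi(\psi) = \bar{\phi}\circ\psi\circ\bar{\phi}^{-1}$; in fact your argument is slightly more complete, since the paper omits both the verification that $\mathrm{Aut}(G)$ is centerless and the (easy) reverse implication.
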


\begin{proof}[Proof of $(\Rightarrow)$ for Theorem \ref{burn}] Suppose that $G$ is centerless and characteristic in $\text{Aut}(G)$. Let $\phi\in \text{Aut}(\text{Aut}(G))$ and let $g\in G$. There is a homomorphism \[i:G\rightarrow \text{Inn}(G)\] given by \[i(g)(h)=ghg^{-1}\] for any $h\in G$. The homomorphism $i$ is an isomorphism because $G$ is centerless. Additionally, because $G$ is characteristic, $\phi$ restricts to an automorphism of $\text{Inn}(G)\cong G$. Define \[\bar{\phi}:G\to G\] by \[i(\bar{\phi}(g)):=\phi(i(g)).\] To show that $\text{Aut}(\text{Aut}(G))=\text{Aut}(G)$, it suffices to show that \[\phi(\psi)=\bar{\phi}\circ\psi\circ \bar{\phi}^{-1}
\] for any $\psi\in\text{Aut}(G)$. Notice that: \[
\phi(i(\psi(g)))=i(\bar{\phi}(\psi(g))).\]

On the other hand \begin{eqnarray*}
\phi(i(\psi(g)))&=& \phi(\psi\circ i(g)\circ \psi^{-1})\\ &=& \phi(\psi)\circ i(\bar{\phi}(g))\circ \phi(\psi)^{-1}\\&=& i(\phi(\psi)(\bar{\phi}(g)).
\end{eqnarray*}  Because $i$ is an isomorphism we can equate \[\phi(\psi)(\bar{\phi}(g))=\bar{\phi}(\psi(g))\] for any $g\in G$. Therefore, \begin{eqnarray*}\phi(\psi)\circ\bar{\phi}&=&\bar{\phi}\circ\psi.\\  \end{eqnarray*} As such, \begin{eqnarray*}
 \phi(\psi)&=&\bar{\phi}\circ\psi\circ\bar{\phi}^{-1}.\end{eqnarray*}
\end{proof}

\begin{proof}[Proof of corollary \ref{maincorollary}] By Theorem \ref{maintheorem}, $P$ is characteristic in $\mcg$. By the Dehn-Nielsen-Baer theorem (see e.g. \cite{primer} Th. 8.1) it follows that $\text{Aut}(P)\cong\extmcg$. To prove the corollary, it suffices to show that $\mcg$ is characteristic in $\extmcg$. Notice that: \begin{eqnarray*}\bbZ/2\bbZ&\cong & \frac{\extmcg}{[\extmcg,\extmcg]}\cong H_1(\extmcg;\bbZ)\hspace{.5cm}\text{ and } \\ \bbZ/2\bbZ& \cong & \frac{\extmcg}{\mcg}.\end{eqnarray*} For further details on these quotients see \cite[Th. 5.2 and Ch. 8]{primer}. Because the quotient $\extmcg/\mcg$ is abelian, $[\extmcg,\extmcg]\subset\mcg$. Further, because the quotients are isomorphic and finite, it follows that $\mcg$ is equal to the commutator subgroup of $\extmcg$. Therefore $\mcg$ is characteristic, and $\text{Out}(\extmcg)\cong 1$.
\end{proof}

\begin{filecontents}{AlgCharPointPush.bib}
@ARTICLE{basslub,
  author = {Bass, H. and Lubotzky, A.},
  title = {Linear-central filtrations on groups},
  journal = {Contemporary Mathematics},
  year = {1994},
  volume = {169},
  pages = {45--45},
  publisher = {AMERICAN MATHEMATICAL SOCIETY}
}

@BOOK{birman2,
  title = {Braids, links, and mapping class groups},
  publisher = {Princeton University Press},
  year = {1975},
  author = {Birman, J.},
  number = {82}
}

@ARTICLE{birman,
  author = {Birman, J.},
  title = {Mapping class groups and their relationship to braid groups},
  journal = {Comm. Pure Appl. Math.},
  year = {1969},
  volume = {22},
  pages = {213--238},
  fjournal = {Communications on Pure and Applied Mathematics},
  issn = {0010-3640},
  mrclass = {55.40},
  mrnumber = {0243519},
  mrreviewer = {L. Neuwirth}
}

@ARTICLE{birmancraggs,
  author = {Birman, J. and Craggs, R.},
  title = {The $\mu$-invariant of 3-manifolds and certain structural properties
	of the group of homeomorphisms of a closed, oriented 2-manifold},
  journal = {Transactions of the American Mathematical Society},
  year = {1978},
  volume = {237},
  pages = {283--309}
}

@ARTICLE{brendle,
  author = {Brendle, T. and Farb, B.},
  title = {The Birman--Craggs--Johnson homomorphism and abelian cycles in the
	Torelli group},
  journal = {Mathematische Annalen},
  year = {2007},
  volume = {338},
  pages = {33--53},
  number = {1},
  publisher = {Springer}
}

@ARTICLE{FarbSp,
  author = {Broaddus, N. and Farb, B. and Putman, A.},
  title = {Irreducible {S}p-representations and subgroup distortion in the mapping
	class group},
  journal = {Comment. Math. Helv.},
  year = {2011},
  volume = {86},
  pages = {537--556},
  number = {3},
  doi = {10.4171/CMH/233},
  fjournal = {Commentarii Mathematici Helvetici. A Journal of the Swiss Mathematical
	Society},
  issn = {0010-2571},
  mrclass = {57M07 (20F65 20F69 57N05 57N99)},
  mrnumber = {2803852},
  mrreviewer = {Athanase Papadopoulos},
  url = {http://dx.doi.org/10.4171/CMH/233}
}

@BOOK{burnside,
  title = {Theory of groups of finite order},
  publisher = {University},
  year = {1911},
  author = {Burnside, W.}
}

@ELECTRONIC{Zev,
  author = {Zev Chonoles},
  month = {March},
  year = {2016},
  title = {math317lecture19.tex},
  note = {electronic tex file},
  url = {http://math.uchicago.edu/~chonoles/expository-notes/courses/2012/317/latex/math317lecture19.tex},
  owner = {Tori},
  timestamp = {2016.03.06}
}

@ARTICLE{nonnormal,
  author = {Clay, M. and Leininger, C. and Mangahas, J.},
  title = {The geometry of right-angled {A}rtin subgroups of mapping class groups},
  journal = {Groups Geom. Dyn.},
  year = {2012},
  volume = {6},
  pages = {249--278},
  number = {2},
  doi = {10.4171/GGD/157},
  fjournal = {Groups, Geometry, and Dynamics},
  issn = {1661-7207},
  mrclass = {57M07 (20F65)},
  mrnumber = {2914860},
  mrreviewer = {Robert W. Bell},
  url = {http://dx.doi.org/10.4171/GGD/157}
}

@ARTICLE{ginzburg,
  author = {William Crawley-Boevey and Pavel Etingof and Victor Ginzburg},
  title = {Noncommutative geometry and quiver algebras },
  journal = {Advances in Mathematics },
  year = {2007},
  volume = {209},
  pages = {274 - 336},
  number = {1},
  abstract = {We develop a new framework for noncommutative differential geometry
	based on double derivations. This leads to the notion of moment map
	and of Hamiltonian reduction in noncommutative symplectic geometry.
	For any smooth associative algebra B, we define its noncommutative
	cotangent bundle T ∗ B , which is a basic example of noncommutative
	symplectic manifold. Applying Hamiltonian reduction to noncommutative
	cotangent bundles gives an interesting class of associative algebras,
	Π = Π ( B ) , that includes preprojective algebras associated with
	quivers. Our formalism of noncommutative Hamiltonian reduction provides
	the space Π / [ Π , Π ] with a Lie algebra structure, analogous
	to the Poisson bracket on the zero fiber of the moment map. In the
	special case where Π is the preprojective algebra associated with
	a quiver of non-Dynkin type, we give a complete description of the
	Gerstenhaber algebra structure on the Hochschild cohomology of Π
	in terms of the Lie algebra Π / [ Π , Π ] . },
  doi = {http://dx.doi.org/10.1016/j.aim.2006.05.004},
  issn = {0001-8708},
  keywords = {Quiver},
  owner = {Tori},
  timestamp = {2015.06.20},
  url = {http://www.sciencedirect.com/science/article/pii/S0001870806001587}
}

@BOOK{primer,
  title = {A Primer on Mapping Class Groups.},
  publisher = {Princeton University Press},
  year = {2012},
  author = {Farb, B. and Margalit, D.},
  series = {Princeton Mathematical Series},
  isbn = {9780691147949},
  url = {http://proxy.uchicago.edu/login?url=http://search.ebscohost.com/login.aspx?direct=true&db=nlebk&AN=386959&site=ehost-live&scope=site}
}

@ARTICLE{Formanek,
  author = {Edward Formanek},
  title = {Characterizing a free group in its automorphism group },
  journal = {Journal of Algebra },
  year = {1990},
  volume = {133},
  pages = {424 - 432},
  number = {2},
  doi = {http://dx.doi.org/10.1016/0021-8693(90)90278-V},
  issn = {0021-8693},
  owner = {Tori},
  timestamp = {2015.07.19},
  url = {http://www.sciencedirect.com/science/article/pii/002186939090278V}
}

@ARTICLE{SurfaceSubgroups,
  author = {Gonzalez-Daez, G. and Harvey, W.J.},
  title = {Surface Groups inside Mapping Class Groups},
  journal = {Topology },
  year = {1999},
  volume = {38},
  pages = {57 - 69},
  number = {1},
  doi = {http://dx.doi.org/10.1016/S0040-9383(97)00104-3},
  issn = {0040-9383},
  owner = {Tori},
  timestamp = {2015.07.19},
  url = {http://www.sciencedirect.com/science/article/pii/S0040938397001043}
}

@ARTICLE{grossman,
  author = {Grossman, E.},
  title = {On the residual finiteness of certain mapping class groups},
  journal = {Journal of the London Mathematical Society},
  year = {1974},
  volume = {2},
  pages = {160--164},
  number = {1},
  publisher = {Oxford University Press}
}

@ARTICLE{hain,
  author = {Hain, R.},
  title = {Torelli groups and geometry of moduli spaces of curves},
  journal = {Current topics in complex algebraic geometry (Berkeley, CA, 1992/93)},
  year = {1995},
  pages = {97--143}
}

@BOOK{hatcher,
  title = {Algebraic topology},
  publisher = {Cambridge University Press},
  year = {2002},
  author = {Hatcher, A.}
}

@INCOLLECTION{Ivanov,
  author = {Ivanov, N.},
  title = {Automorphisms of {T}eichm\"uller modular groups},
  booktitle = {Topology and geometry---{R}ohlin {S}eminar},
  publisher = {Springer, Berlin},
  year = {1988},
  volume = {1346},
  series = {Lecture Notes in Math.},
  pages = {199--270},
  doi = {10.1007/BFb0082778},
  mrclass = {57M99 (22E40 32G15 57N05 57R50)},
  mrnumber = {970079 (89m:57012)},
  mrreviewer = {Hugh M. Hilden},
  url = {http://dx.doi.org/10.1007/BFb0082778}
}

@INPROCEEDINGS{ivanov1984,
  author = {Ivanov, N.},
  title = {Algebraic properties of the {T}eichm{\"u}ller modular group},
  booktitle = {Dokl. Akad. Nauk SSSR},
  year = {1984},
  volume = {275},
  number = {4},
  pages = {786--789}
}

@ARTICLE{IvanovMccarthy,
  author = {Ivanov, N. and McCarthy, J.},
  title = {On injective homomorphisms between {T}eichm\"uller modular groups.
	{I}},
  journal = {Invent. Math.},
  year = {1999},
  volume = {135},
  pages = {425--486},
  number = {2},
  coden = {INVMBH},
  doi = {10.1007/s002220050292},
  fjournal = {Inventiones Mathematicae},
  issn = {0020-9910},
  mrclass = {57M60 (20F38 30F10)},
  mrnumber = {1666775 (99m:57014)},
  mrreviewer = {Athanase Papadopoulos},
  owner = {Tori},
  timestamp = {2015.07.19},
  url = {http://dx.doi.org/10.1007/s002220050292}
}

@INPROCEEDINGS{jaco,
  author = {Jaco, W.},
  title = {On certain subgroups of the fundamental group of a closed surface},
  booktitle = {Proc. Cambridge Philos. Soc},
  year = {1970},
  volume = {67},
  pages = {17--18},
  organization = {Cambridge Univ Press}
}

@ARTICLE{johnsonstructure3,
  author = {Johnson, D.},
  title = {The structure of the {T}orelli group {III}: The abelianization of
	$\mathcal{I}$},
  journal = {Topology},
  year = {1985},
  volume = {24},
  pages = {127--144},
  number = {2},
  publisher = {Elsevier}
}

@ARTICLE{johnsonstructure1,
  author = {Johnson, D.},
  title = {The structure of the {T}orelli group {I}: a finite set of generators
	for J},
  journal = {Annals of Mathematics},
  year = {1983},
  pages = {423--442},
  publisher = {JSTOR}
}

@ARTICLE{johnsonsurvey,
  author = {Johnson, D.},
  title = {A survey of the Torelli group},
  journal = {Contemporary Math},
  year = {1983},
  volume = {20},
  pages = {165--179}
}

@ARTICLE{abelianquotient,
  author = {Johnson, D.},
  title = {An abelian quotient of the mapping class group $\mathcal{I}_g$},
  journal = {Mathematische Annalen},
  year = {1980},
  volume = {249},
  pages = {225--242},
  number = {3},
  publisher = {Springer}
}

@ARTICLE{johnsonbirman,
  author = {Johnson, D.},
  title = {Quadratic forms and the {B}irman-{C}raggs homomorphisms},
  journal = {Transactions of the American Mathematical Society},
  year = {1980},
  volume = {261},
  pages = {235--254},
  number = {1}
}

@ARTICLE{JohnsonQuadratic,
  author = {Johnson, D.},
  title = {Quadratic Forms and the Birman-Craggs Homomorphisms},
  journal = {Transactions of the American Mathematical Society},
  year = {1980},
  volume = {261},
  pages = {235-254},
  number = {1},
  abstract = {Let $\mathscr{M}_g$ be the mapping class group of a genus $g$ orientable
	surface $M$, and $\mathscr{J}_g$ the subgroup of those maps acting
	trivially on the homology group $H_1(M, Z)$. Birman and Craggs produced
	homomorphisms from $\mathscr{J}_g$ to $Z_2$ via the Rochlin invariant
	and raised the question of enumerating them; in this paper we answer
	their question. It is shown that the homomorphisms are closely related
	to the quadratic forms on $H_1(M, Z_2)$ which induce the intersection
	form; in fact, they are in 1-1 correspondence with those quadratic
	forms of Arf invariant zero. Furthermore, the methods give a description
	of the quotient of $\mathscr{J}_g$ by the intersection of the kernels
	of all these homomorphisms. It is a $Z_2$-vector space isomorphic
	to a certain space of cubic polynomials over $H_1(M, Z_2)$. The dimension
	is then computed and found to be $\binom{2g}{3} + \binom{2g}{2}$.
	These results are also extended to the case of a surface with one
	boundary component, and in this situation the linear relations among
	the various homomorphisms are also determined.},
  issn = {00029947},
  publisher = {American Mathematical Society},
  url = {http://www.jstor.org/stable/1998327}
}

@ARTICLE{korkmaz,
  author = {Korkmaz, M.},
  title = {The symplectic representation of the mapping class group is unique},
  journal = {arXiv preprint arXiv:1108.3241},
  year = {2011}
}

@ARTICLE{Labute1970,
  author = {Labute, J. P.},
  title = {On the descending central series of groups with a single defining
	relation },
  journal = {Journal of Algebra },
  year = {1970},
  volume = {14},
  pages = {16 - 23},
  number = {1},
  doi = {http://dx.doi.org/10.1016/0021-8693(70)90130-4},
  issn = {0021-8693},
  owner = {Tori},
  timestamp = {2015.06.20},
  url = {http://www.sciencedirect.com/science/article/pii/0021869370901304}
}

@INPROCEEDINGS{lcs,
  author = {Lazard, M.},
  title = {Sur les groupes nilpotents et les anneaux de Lie},
  booktitle = {Annales Scientifiques de l'{\'E}cole Normale Sup{\'e}rieure},
  year = {1954},
  volume = {71},
  number = {2},
  pages = {101--190},
  organization = {Soci{\'e}t{\'e} math{\'e}matique de France}
}

@ARTICLE{leininger,
  author = {Leininger, C. and Reid, A.},
  title = {A combination theorem for {V}eech subgroups of the mapping class
	group},
  journal = {Geometric \& Functional Analysis GAFA},
  year = {2006},
  volume = {16},
  pages = {403--436},
  number = {2},
  publisher = {Springer}
}

@BOOK{MKS,
  title = {Combinatorial Group Theory: Presentations of Groups in Terms of Generators
	and Relations},
  publisher = {Dover Publications},
  year = {2004},
  author = {Magnus, W. and Karrass, A. and Solitar, D.},
  series = {Dover books on mathematics},
  isbn = {9780486438306},
  lccn = {2004056124},
  owner = {Tori},
  timestamp = {2015.07.21},
  url = {https://books.google.com/books?id=1LW4s1RDRHQC}
}

@ARTICLE{malcev,
  author = {Malcev, A. I.},
  title = {On a class of homogeneous spaces},
  journal = {Amer. Math. Soc. Translation},
  year = {1951},
  volume = {1951},
  pages = {33},
  number = {39},
  fjournal = {American Mathematical Society Translations},
  issn = {0065-9290},
  mrclass = {20.0X},
  mrnumber = {0039734}
}

@ARTICLE{McCtrivout,
  author = {McCarthy, J.},
  title = {Automorphisms of surface mapping class groups. A recent theorem of
	{N}. Ivanov},
  journal = {Inventiones mathematicae},
  year = {1986},
  volume = {84},
  pages = {49--71},
  number = {1},
  publisher = {Springer}
}

@ARTICLE{Morita,
  author = {Morita, S.},
  title = {On the structure of the torelli group and the casson invariant },
  journal = {Topology },
  year = {1991},
  volume = {30},
  pages = {603 - 621},
  number = {4},
  doi = {http://dx.doi.org/10.1016/0040-9383(91)90042-3},
  issn = {0040-9383},
  owner = {Tori},
  timestamp = {2015.07.22},
  url = {http://www.sciencedirect.com/science/article/pii/0040938391900423}
}

@BOOK{Spgen,
  title = {Symplectic groups},
  publisher = {American Mathematical Soc.},
  year = {1978},
  author = {O'Meara, O.},
  volume = {16}
}

@UNPUBLISHED{papazoglou,
  author = {Papazoglou, P.},
  title = {Geometric Group Theory},
  note = {\url{https://www0.maths.ox.ac.uk/system/files/coursematerial/2015/3112/49/Geometric_Group_Theory.pdf}},
  month = {December},
  year = {2014},
  publisher = {University of Oxford},
  url = {https://www0.maths.ox.ac.uk/system/files/coursematerial/2015/3112/49/Geometric_Group_Theory.pdf}
}

@ARTICLE{paris,
  author = {Paris, L.},
  title = {Residual properties of mapping class groups and surface groups},
  journal = {Transactions of the American Mathematical Society},
  year = {2009},
  volume = {361},
  pages = {2487--2507},
  number = {5}
}

@INPROCEEDINGS{surface,
  author = {Reid, A.},
  title = {Surface subgroups of mapping class groups},
  booktitle = {PROCEEDINGS OF SYMPOSIA IN PURE MATHEMATICS},
  year = {2006},
  volume = {74},
  pages = {257},
  organization = {Providence, RI; American Mathematical Society; 1998}
}

@ARTICLE{scottwall,
  author = {Scott, P. and Wall, T.},
  title = {Topological methods in group theory},
  journal = {Homological group theory},
  year = {1979},
  volume = {36},
  pages = {137},
  publisher = {Cambridge University Press}
}

@ARTICLE{thurston,
  author = {Thurston, W.},
  title = {A norm for the homology of 3-manifolds},
  journal = {Memoirs of the American Mathematical Society},
  year = {1986},
  volume = {59},
  pages = {99--130},
  number = {339},
  publisher = {American Mathematical Society}
}

@BOOK{weibel,
  title = {An introduction to homological algebra},
  publisher = {Cambridge University Press},
  year = {1995},
  author = {Weibel, C.},
  number = {38}
}
\end{filecontents}

\bibliographystyle{alpha}
\bibliography{AlgCharPointPush}

\begin{thebibliography}{CBEG07}

\bibitem[BC78]{birmancraggs}
J.~Birman and R.~Craggs.
\newblock The $\mu$-invariant of 3-manifolds and certain structural properties
  of the group of homeomorphisms of a closed, oriented 2-manifold.
\newblock {\em Transactions of the American Mathematical Society},
  237:283--309, 1978.

\bibitem[BFP11]{FarbSp}
N.~Broaddus, B.~Farb, and A.~Putman.
\newblock Irreducible {S}p-representations and subgroup distortion in the
  mapping class group.
\newblock {\em Comment. Math. Helv.}, 86(3):537--556, 2011.

\bibitem[Bir69]{birman}
J.~Birman.
\newblock Mapping class groups and their relationship to braid groups.
\newblock {\em Comm. Pure Appl. Math.}, 22:213--238, 1969.

\bibitem[Bir75]{birman2}
J.~Birman.
\newblock {\em Braids, links, and mapping class groups}.
\newblock Number~82. Princeton University Press, 1975.

\bibitem[BL94]{basslub}
H.~Bass and A.~Lubotzky.
\newblock Linear-central filtrations on groups.
\newblock {\em Contemporary Mathematics}, 169:45--45, 1994.

\bibitem[Bur11]{burnside}
W.~Burnside.
\newblock {\em Theory of groups of finite order}.
\newblock University, 1911.

\bibitem[CBEG07]{ginzburg}
William Crawley-Boevey, Pavel Etingof, and Victor Ginzburg.
\newblock Noncommutative geometry and quiver algebras.
\newblock {\em Advances in Mathematics}, 209(1):274 -- 336, 2007.

\bibitem[CLM12]{nonnormal}
M.~Clay, C.~Leininger, and J.~Mangahas.
\newblock The geometry of right-angled {A}rtin subgroups of mapping class
  groups.
\newblock {\em Groups Geom. Dyn.}, 6(2):249--278, 2012.

\bibitem[FM12]{primer}
B.~Farb and D.~Margalit.
\newblock {\em A Primer on Mapping Class Groups.}
\newblock Princeton Mathematical Series. Princeton University Press, 2012.

\bibitem[For90]{Formanek}
Edward Formanek.
\newblock Characterizing a free group in its automorphism group.
\newblock {\em Journal of Algebra}, 133(2):424 -- 432, 1990.

\bibitem[Gro74]{grossman}
E.~Grossman.
\newblock On the residual finiteness of certain mapping class groups.
\newblock {\em Journal of the London Mathematical Society}, 2(1):160--164,
  1974.

\bibitem[Hai95]{hain}
R.~Hain.
\newblock Torelli groups and geometry of moduli spaces of curves.
\newblock {\em Current topics in complex algebraic geometry (Berkeley, CA,
  1992/93)}, pages 97--143, 1995.

\bibitem[Hat02]{hatcher}
A.~Hatcher.
\newblock {\em Algebraic topology}.
\newblock Cambridge University Press, 2002.

\bibitem[IM99]{IvanovMccarthy}
N.~Ivanov and J.~McCarthy.
\newblock On injective homomorphisms between {T}eichm\"uller modular groups.
  {I}.
\newblock {\em Invent. Math.}, 135(2):425--486, 1999.

\bibitem[Iva84]{ivanov1984}
N.~Ivanov.
\newblock Algebraic properties of the {T}eichm{\"u}ller modular group.
\newblock In {\em Dokl. Akad. Nauk SSSR}, volume 275, pages 786--789, 1984.

\bibitem[Jac70]{jaco}
W.~Jaco.
\newblock On certain subgroups of the fundamental group of a closed surface.
\newblock In {\em Proc. Cambridge Philos. Soc}, volume~67, pages 17--18.
  Cambridge Univ Press, 1970.

\bibitem[Joh80a]{abelianquotient}
D.~Johnson.
\newblock An abelian quotient of the mapping class group $\mathcal{I}_g$.
\newblock {\em Mathematische Annalen}, 249(3):225--242, 1980.

\bibitem[Joh80b]{johnsonbirman}
D.~Johnson.
\newblock Quadratic forms and the {B}irman-{C}raggs homomorphisms.
\newblock {\em Transactions of the American Mathematical Society},
  261(1):235--254, 1980.

\bibitem[Joh83a]{johnsonstructure1}
D.~Johnson.
\newblock The structure of the {T}orelli group {I}: a finite set of generators
  for j.
\newblock {\em Annals of Mathematics}, pages 423--442, 1983.

\bibitem[Joh83b]{johnsonsurvey}
D.~Johnson.
\newblock A survey of the torelli group.
\newblock {\em Contemporary Math}, 20:165--179, 1983.

\bibitem[Joh85]{johnsonstructure3}
D.~Johnson.
\newblock The structure of the {T}orelli group {III}: The abelianization of
  $\mathcal{I}$.
\newblock {\em Topology}, 24(2):127--144, 1985.

\bibitem[Kor11]{korkmaz}
M.~Korkmaz.
\newblock The symplectic representation of the mapping class group is unique.
\newblock {\em arXiv preprint arXiv:1108.3241}, 2011.

\bibitem[Lab70]{Labute1970}
J.~P. Labute.
\newblock On the descending central series of groups with a single defining
  relation.
\newblock {\em Journal of Algebra}, 14(1):16 -- 23, 1970.

\bibitem[Laz54]{lcs}
M.~Lazard.
\newblock Sur les groupes nilpotents et les anneaux de lie.
\newblock In {\em Annales Scientifiques de l'{\'E}cole Normale Sup{\'e}rieure},
  volume~71, pages 101--190. Soci{\'e}t{\'e} math{\'e}matique de France, 1954.

\bibitem[LR06]{leininger}
C.~Leininger and A.~Reid.
\newblock A combination theorem for {V}eech subgroups of the mapping class
  group.
\newblock {\em Geometric \& Functional Analysis GAFA}, 16(2):403--436, 2006.

\bibitem[Mal51]{malcev}
A.~I. Malcev.
\newblock On a class of homogeneous spaces.
\newblock {\em Amer. Math. Soc. Translation}, 1951(39):33, 1951.

\bibitem[McC86]{McCtrivout}
J.~McCarthy.
\newblock Automorphisms of surface mapping class groups. a recent theorem of
  {N}. ivanov.
\newblock {\em Inventiones mathematicae}, 84(1):49--71, 1986.

\bibitem[MKS04]{MKS}
W.~Magnus, A.~Karrass, and D.~Solitar.
\newblock {\em Combinatorial Group Theory: Presentations of Groups in Terms of
  Generators and Relations}.
\newblock Dover books on mathematics. Dover Publications, 2004.

\bibitem[O'M78]{Spgen}
O.~O'Meara.
\newblock {\em Symplectic groups}, volume~16.
\newblock American Mathematical Soc., 1978.

\bibitem[Par09]{paris}
L.~Paris.
\newblock Residual properties of mapping class groups and surface groups.
\newblock {\em Transactions of the American Mathematical Society},
  361(5):2487--2507, 2009.

\bibitem[Thu86]{thurston}
W.~Thurston.
\newblock A norm for the homology of 3-manifolds.
\newblock {\em Memoirs of the American Mathematical Society}, 59(339):99--130,
  1986.

\bibitem[Wei95]{weibel}
C.~Weibel.
\newblock {\em An introduction to homological algebra}.
\newblock Number~38. Cambridge University Press, 1995.

\end{thebibliography}

\end{document}